\newif\ifdviwin
\newif\ifdviwin
\def\H{\mathcal{H}}
\def\H{\H}
\def\m2r{\mathbb{M}^2\times\mathbb{R}}
\def\h2r{\mathbb{H}^2\times\mathbb{R}}
\let\infty=\infty \let\0=\emptyset  
\let\hat=\widehat
\let\parc=\partial
\let\varepsilon=\varepsilon
\def\cte.{\mathop{\rm cte.}\nolimits}
\def\cosh{\mathop{\rm cosh }\nolimits}
\def\tanh{\mathop{\rm tanh }\nolimits}
\def\R{\mathbb{R}}
\def\M{\mathbb{M}}
\def\H{\mathcal{H}}
\def\f{\mathfrak{f}}
\def\S{\mathbb{S}}
\def\m2r{\M^2\times\R}
\def\h2r{\mathbb{H}^2\times\R}
\def\s2r{\mathbb{S}^2\times\R}
\def\lH{\H_\lambda\text{-}\text{hypersurface}}
\def\lHn{\H_\lambda\text{-}\text{hypersurfaces}}
\def\sig{\Sigma}
\def\r3{\mathbb{R}^3}
\def\n1{_{n+1}}
\def\rnn{\mathbb{R}^{n+1}}
\def\sn{\mathbb{S}^n}
 \newtheorem{defi}{Definition}[section]
 \newtheorem{teo}[defi]{Theorem}
 \newtheorem{pro}[defi]{Proposition}
 \newtheorem{lem}[defi]{Lemma}
 \newenvironment{proof}{\rm \trivlist \item[\hskip \labelsep{\it
      Proof}:]}{\nopagebreak \hfill $\Box$ \endtrivlist}
\numberwithin{equation}{section}
\begin{document}


\begin{center}

\renewcommand{\thefootnote}{\,}
{\large \bf Invariant hypersurfaces with linear prescribed mean curvature
\footnote{\hspace{-.75cm}
\emph{Mathematics Subject Classification:} 53A10, 53C42, 34C05,	34C40.\\
\emph{Keywords}: Prescribed mean curvature hypersurface, weighted mean curvature, non-linear autonomous system.}}\\
\vspace{0.5cm} { Antonio Bueno$^\dagger$, Irene Ortiz$^\ddagger$}\\
\end{center}
\vspace{.5cm}
$^\dagger$Departamento de Geometría y Topología, Universidad de Granada, E-18071 Granada, Spain. \\ \vspace{.3cm}
\emph{E-mail address:} jabueno@ugr.es\\
$^\ddagger$Departamento de Ciencias e Informática, Centro Universitario de la Defensa de San Javier, E-30729 Santiago de la Ribera, Spain. \\ \vspace{.3cm}
\emph{E-mail address:} irene.ortiz@cud.upct.es


\begin{abstract}
Our aim is to study invariant hypersurfaces immersed in the Euclidean space $\mathbb{R}^{n+1}$, whose mean curvature is given as a linear function in the unit sphere $\mathbb{S}^n$ depending on its Gauss map. These hypersurfaces are closely related with the theory of manifolds with density, since their weighted mean curvature in the sense of Gromov is constant. In this paper we obtain explicit parametrizations of constant curvature hypersurfaces, and also give a classification of rotationally invariant hypersurfaces.
\end{abstract}

\begin{center}
Contents

$\begin{array}{lr}
1.\hspace{.25cm} \text{Introduction} &\hspace{3cm}\hyperlink{page.1}{1}\\
2.\hspace{.25cm} \text{Constant curvature}\ \lHn&\hyperlink{page.3}{3}\\
3.\hspace{.25cm} \text{The phase plane of rotational}\ \lHn &\hyperlink{page.7}{7}\\
4.\hspace{.25cm} \text{Classification of rotational}\ \lHn &\hyperlink{page.8}{8}\\
5.\hspace{.25cm} \text{References} & \hyperlink{page.21}{21}
\end{array}
$
\end{center}

\section{\large Introduction}\label{intro}
\vspace{-.5cm}

Let us consider an oriented hypersurface $\sig$ immersed into $\rnn$ whose mean curvature is denoted by $H_\sig$ and its Gauss map by $\eta:\sig\rightarrow\S^n\subset\rnn$. Following \cite{BGM1}, given a function $\H\in C^1(\S^n)$, $\sig$ is said to be a hypersurface of \emph{prescribed mean curvature} $\H$ if 
\begin{equation}\label{prescribedMC}
H_\sig(p)=\H(\eta_p),
\end{equation}
for every point $p\in\sig$. Observe that when the prescribed function $\H$ is constant, $\sig$ is a hypersurface of constant mean curvature (CMC).

It is a classical problem in the Differential Geometry the study of hypersurfaces which are defined by means of a prescribed curvature function in terms of the Gauss map, being remarkable the Minkowski and Christoffel problems for ovaloids (\cite{Min,Chr}). In particular, when such prescribed function is the mean curvature, the hypersurfaces arising are the ones governed by \eqref{prescribedMC}. For them, the existence and uniqueness of ovaloids was studied, among others, by Alexandrov and Pogorelov in the '50s, \cite{Ale,Pog}, and more recently by Guan and Guan in \cite{GuGu}. Nevertheless, the global geometry of complete, non-compact hypersurfaces of prescribed mean curvature in $\rnn$ has been unexplored for general choices of $\H$ until recently. In this framework, the first author jointly with Gálvez and Mira have started to develop the \emph{global theory of hypersurfaces with prescribed mean curvature} in \cite{BGM1}, taking as a starting point the well-studied global theory of CMC hypersurfaces in $\rnn$. The same authors have also studied rotational hypersurfaces in $\rnn$, getting a Delaunay-type classification result and several examples of rotational hypersurfaces with further symmetries and topological properties (see \cite{BGM2}). For prescribed mean curvature surfaces in $\r3$, see \cite{Bue1} for the resolution of the Björling problem and \cite{Bue2} for the obtention of half-space theorems properly immersed surfaces.


Our objective in this paper is to further investigate the geometry of complete hypersurfaces of prescribed mean curvature for a relevant choice of the prescribed function. In particular, let us consider $\H\in C^1(\S^n)$ a linear function, that is, 
\[
\H(x)=a\langle x,v\rangle+\lambda
\]
for every $x\in\S^n$, where $a,\lambda\in\R$ and $v$ is a unit vector called the \emph{density vector}. Note that if $a=0$ we are studying hypersurfaces with constant mean curvature equal to $\lambda$. Moreover, if $\lambda=0$, we are studying self-translating solitons of the mean curvature flow, case which is widely studied in the literature (see e.g.  \cite{CSS,HuSi,Ilm,MSHS,SpXi} and references therein). Therefore, we will assume that $a$ and $\lambda$ are not null in order to avoid the trivial cases. Furthermore, after a homothety of factor $1/a$ in $\rnn$, we can get $a=1$ without loss of generality. Bearing in mind these considerations, we focus on the following class of hypersurfaces.

\begin{defi}
An immersed, oriented hypersurface $\sig$ in $\rnn$ is an $\H_\lambda$-hypersurface if its mean curvature function $H_\sig$ is given by
\begin{equation}\label{defilambdasup}
H_\sig(p)=\H_\lambda(\eta_p)=\langle\eta_p,v\rangle+\lambda, \quad \forall p\in\sig.
\end{equation}
\end{defi}
See that if $\sig$ is an $\lH$ with Gauss map $\eta$, then $\sig$ with the opposite orientation $-\eta$ is trivially a $\H_{-\lambda}$-hypersurface. Thus, up to a change of the orientation, we assume $\lambda>0$. 

The relevance of the class of $\lHn$ lies in the fact that they satisfy some characterizations which are closely related to the theory of manifolds with density. Firstly, following Gromov \cite{Gro}, for an oriented hypersurface $\sig$ in $\R^{n+1}$ with respect to the density $e^\phi\in C^1(\R^{n+1})$, the \emph{weighted mean curvature} $H_\phi$ of $\sig$ is defined by
\begin{equation}\label{weightedMC}
H_\phi:=H_\sig-\langle\eta,\nabla\phi\rangle,
\end{equation}
where $\nabla$ is the gradient operator of $M$. Note that when the density is $\phi_v(x)=\langle x,v\rangle$, by using \eqref{defilambdasup} and \eqref{weightedMC} it follows that $\sig$ is an $\lH$ if and only if $H_{\phi_v}=\lambda$. In particular, as pointed out by Ilmanen \cite{Ilm}, self-translating solitons are \emph{weighted minimal}, i.e. $H_{\phi_v}=0$. On the other hand, although hypersurfaces of prescribed mean curvature do not come in general associated to a variational problem, the $\lHn$ do. To be more specific, consider any measurable set $\Omega\subset\rnn$ having as boundary $\sig=\partial\Omega$ and inward unit normal $\eta$ along $\sig$. Then, the \emph{weighted area and volume} of $\Omega$ with respect to the density $\phi_v$ are given respectively by
$$
A_{\phi_v}(\sig):=\int_\sig e^{\phi_v} d\sig,\hspace{.5cm} V_{\phi_v}(\Omega):=\int_\Omega e^{\phi_v} dV,
$$
where $d\sig$ and $dV$ are the usual area and volume elements in $\rnn$. So, in \cite{BCMR} it is proved that $\sig$ has constant weighted mean curvature equal to $\lambda$ if and only if $\sig$ is a critical point under compactly supported variations of the functional $J_{\phi_v}$, where
$$
J_{\phi_v}:=A_{\phi_v}-\lambda V_{\phi_v}.
$$
Finally, observe that if $f:\sig\rightarrow\R^{n+1}$ is an $\lH$, the family of translations of $f$ in the $v$ direction given by $F(p,t)=f(p)+tv$ is the solution of the geometric flow  
\begin{equation}\label{geoflo}
\left(\frac{\parc F}{\parc t}\right)^{\bot}=(H_\sig-\lambda)\eta,
\end{equation}
which corresponds to the mean curvature flow with a constant forcing term, that is, $f$ is a self-translating soliton of the geometric flow \eqref{geoflo}. This flow already appeared in the context of studying the \emph{volume preserving mean curvature flow}, introduced by Huisken \cite{Hui}. 


Throughout this work we focus our attention on $\lHn$ which are \emph{invariant} under the flow of an $(n-1)$-group of translations and the isometric $SO(n)$-action of rotations that pointwise fixes a straight line. The first group of isometries generates \emph{cylindrical flat hypersurfaces}, while the second one corresponds to \emph{rotational hypersurfaces}. These isometries and the symmetries inherited by the invariant $\lHn$ are induced to Equation \eqref{defilambdasup} easing the treatment of its solutions. We must emphasize that, although the authors already defined the class of immersed $\lHn$ in \cite{BGM1}, the classification of neither cylindrical nor rotational $\lHn$ in \cite{BGM2} was covered.

We next detail the organization of the paper. In Section \ref{constantcurv} we study complete $\lHn$ that have constant curvature. By classical theorems of Liebmann, Hilbert and Hartman-Nirenberg, any such $\lH$ must be flat, hence invariant by an $(n-1)$-group of translations and described as the riemannian product $\alpha\times\R^{n-1}$, where $\alpha$ is a plane curve called the \emph{base curve}. This product structure allows us to relate the condition of being an $\lH$ with the geometry of $\alpha$. Indeed, the curvature $\kappa_\alpha$ is, essentially, the mean curvature of $\alpha\times\R^{n-1}$. In Theorem \ref{clasificacioncurvaturacte} we classify such $\H_\lambda$-hypersurfaces by giving explicit parametrizations of the base curve.

Later, in Section \ref{properties} we introduce the phase plane for the study of rotational $\lHn$. In particular, we treat the ODE that the profile curve of a rotational $\lH$ satisfies as a non-linear autonomous system since the qualitative study of the solutions of this system will be carried out by a phase plane analysis, as the first author did jointly with Gálvez and Mira in \cite{BGM2}. Finally, in Section \ref{rot} we give a complete classification of rotational $\lHn$ intersecting the axis of rotation in Theorem \ref{Classification1} and non-intersecting such axis in Theorem \ref{Classification2}. To get such results we develop along this section a discussion depending on the value of $\lambda$, namely $\lambda>1,\ \lambda=1$ and $\lambda<1$. 

\section{\large Constant curvature $\H_\lambda$-hypersurfaces}\label{constantcurv}
\vspace{-.5cm}

The aim of this section is to obtain a classification result for complete $\lHn$ with constant curvature. By classical theorems of Liebmann, Hilbert, and Hartman-Nirenberg, any such $\lH$ must be flat, hence invariant by an $(n-1)$-parameter group of translations $\mathcal{G}_{a_1,...,a_{n-1}}=\{F_{t_1,...,t_{n-1}};\ t_i\in\R\}$ where $a_i\in\rnn$ with $i=1,...,n-1$, are linearly independent and $F_{t_1,...,t_{n-1}}(p)=p+\sum_{i=1}^{n-1}t_i a_i$, for every $p\in\rnn$. Any $\lH$ invariant by such a group is called a \emph{cylindrical flat $\H_\lambda$-hypersurface}, and the directions $a_1,...,a_{n-1}$ are known as \emph{ruling directions}. 

For cylindrical flat hypersurfaces having as rulings $a_1,...,a_{n-1}$, it is known that a global parametri-
zation is given by 
$$
\psi(s,t_1,...,t_{n-1})=\alpha(s)+\sum_{i=1}^{n-1}t_i a_i,
$$
where $\alpha$ is a curve, called the \emph{base curve}, contained in a 2-dimensional plane $\Pi$ orthogonal to the vector space $\mathrm{Lin}\langle a_1,...,a_{n-1}\rangle$. Henceforth, we will denote a cylindrical flat $\lH$ by $\sig_\alpha:=\alpha\times\R^{n-1}$, where $\R^{n-1}$ stands for the orthogonal complement of $\Pi$. From this parametrization we obtain that $\sig_\alpha$ has, at most, two different principal curvatures: one given by the curvature of $\alpha$, $\kappa_\alpha$, and the $n-1$ remaining being identically zero. Since the mean curvature $H_{\sig_\alpha}$ of $\sig_\alpha$ is given as the mean of its principal curvatures, it follows from Equation \eqref{defilambdasup} that $\kappa_\alpha$ satisfies
\begin{equation}\label{curvaturaalpha}
\kappa_\alpha=n H_{\sig_\alpha}=n(\langle\textbf{n}_\alpha,v\rangle+\lambda),
\end{equation}
where $\textbf{n}_\alpha:= J\alpha'$ is the positively oriented, unit normal of $\alpha$ in $\Pi$. We must emphasize that there is no a priori relation between the density vector $v$ and the ruling directions $a_i$.

It is immediate that if $\Pi^\bot$ and $v$ are parallel, then Equation \eqref{curvaturaalpha} implies that $\kappa_\alpha=\lambda n$ is constant, and thus $\alpha$ is a straight line or a circle in $\Pi$ of radius $1/(\lambda n)$. Hence, \emph{hyperplanes and right circular cylinders are the only $\H_\lambda$-hypersurfaces whose rulings are parallel to the density vector}. Another particular but important case appears when $\lambda=0$, that is, for translating solitons. It is known that if $v$ and $\Pi^\bot$ are orthogonal, the cylindrical translating solitons are hyperplanes generated by $\Pi^\bot$ and $v$, and \emph{grim reaper} cylinders.

After a change of Euclidean coordinates, we suppose that the plane $\Pi$ is the one generated by the vectors $e_1$ and $e\n1$, and after a rotation around $e_1$ we suppose that the density vector $v$ has coordinates $v=(0,v_2,...,v\n1)$. Moreover, we can assume that $v\n1\neq 0$; otherwise $v$ and the ruling directions $\Pi^\bot$ are parallel and $\alpha$ is a straight line or a circle in $\Pi$. Assume that $\alpha(s)=(x(s),0,...,0,z(s))$ is arc-length parametrized, that is $x'(s)=\cos\theta(s),\ z'(s)=\sin\theta(s)$ where the function $\theta(s)$ is the angle between $\alpha'(s)$ and the $e_1$-direction. Since the curvature $\kappa_\alpha(s)$ is given by $\theta'(s)$, Equation \eqref{curvaturaalpha} is equivalent to the following
\begin{equation}\label{sistemadiferencial}
\left\lbrace\begin{array}{l}
\vspace{.25cm}
x'(s)=\cos\theta(s)\\
\vspace{.25cm}
z'(s)=\sin\theta(s)\\
\theta'(s)=n\big(v\n1\cos\theta(s)+\lambda\big).
\end{array}
\right.
\end{equation}
We point out that for certain values of $\lambda$, system \eqref{sistemadiferencial} has trivial solutions. Indeed, suppose that $\lambda\in [-v\n1,v\n1]$ and let $\theta_0$ be such that $\cos\theta_0=-\lambda/v\n1$. Then, the straight line parametrized by $x(s)=(\cos\theta_0) s,\ z(s)=(\sin\theta_0) s,\ \theta(s)=\theta_0$ solves \eqref{sistemadiferencial}. Thus, by uniqueness of the ODE \eqref{sistemadiferencial}, \emph{if $\alpha$ has curvature vanishing at some point, it is a straight line}.

Now we solve \eqref{sistemadiferencial} for the case that $v\n1\neq 0$ and $\theta'(s)\neq 0$. Integrating the last equation we obtain the explicit expression of the function $\theta(s)$, depending on $\lambda$ and $v\n1$:

$$
\theta(s)=\left\lbrace\begin{array}{ll}
\vspace{.25cm}
2\arctan\left(\sqrt{\frac{\lambda+v\n1}{\lambda-v\n1}}\tan\left(\frac{n}{2}\sqrt{\lambda^2-v\n1^2}s\right)\right) & \hspace{.5cm}\mathrm{if}\ \lambda>v\n1\\
\vspace{.25cm}
2\arctan(nv\n1s)& \hspace{.5cm}\mathrm{if}\ \lambda=v\n1\\
\vspace{.25cm}
2\arctan\left(\sqrt{\frac{v\n1+\lambda}{v\n1-\lambda}}\tanh\left(\frac{n}{2}\sqrt{v\n1^2-\lambda^2}s\right)\right) & \hspace{.5cm}\mathrm{if}\ \lambda<v\n1,\ \mathrm{and}\ \theta(0)=0\\
\vspace{.25cm}
2\mathrm{arccotg}\left(\sqrt{\frac{v\n1-\lambda}{v\n1+\lambda}}\tanh\left(\frac{n}{2}\sqrt{v\n1^2-\lambda^2}s\right)\right) & \hspace{.5cm}\mathrm{if}\ \lambda<v\n1,\ \mathrm{and}\ \theta(0)=\pi.
\end{array}\right.
$$

Since $x'(s)=\cos\theta(s)$ and $z'(s)=\sin\theta(s)$, explicit integration yields the following classification result:

\begin{teo}\label{clasificacioncurvaturacte}
Up to vertical translations, the coordinates of the base curve of a cylindrical flat $\H_\lambda$-hypersurface $\sig_\alpha$ are classified as follows:
\end{teo}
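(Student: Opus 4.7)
The plan is to start from the explicit expressions for $\theta(s)$ already derived from integrating the third equation in \eqref{sistemadiferencial}, and then obtain $x(s)$ and $z(s)$ by integrating $x'(s)=\cos\theta(s)$ and $z'(s)=\sin\theta(s)$ in each of the four cases. The key identity I would exploit throughout is the pair of half-angle formulas
$$
\cos(2\arctan u)=\frac{1-u^2}{1+u^2},\qquad \sin(2\arctan u)=\frac{2u}{1+u^2},
$$
which turn the expressions for $\cos\theta$ and $\sin\theta$ into rational functions of the argument of the arctangent. Combined with a change of variable absorbing the linear factor inside $\tan$ or $\tanh$, the integrals become elementary.

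In the case $\lambda>v_{n+1}$, I would set $w=\tan\!\big(\tfrac{n}{2}\sqrt{\lambda^2-v_{n+1}^2}\,s\big)$, so that $\cos\theta$ and $\sin\theta$ become rational functions of $w$ with denominator of the form $(\lambda-v_{n+1})+(\lambda+v_{n+1})w^2$. After the substitution $ds=\tfrac{2}{n\sqrt{\lambda^2-v_{n+1}^2}}\cdot\tfrac{dw}{1+w^2}$, the integrals reduce to partial fractions in $w$; integrating and reverting to $s$ yields closed-form expressions in $s$ involving trigonometric functions. In the limiting case $\lambda=v_{n+1}$, the substitution $u=nv_{n+1}s$ makes both $\cos\theta$ and $\sin\theta$ rational functions of $s$ directly, and the integrals produce an arctangent for $x(s)$ and a logarithm for $z(s)$. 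The two subcases $\lambda<v_{n+1}$ are handled analogously, replacing $\tan$ by $\tanh$: with $w=\tanh\!\big(\tfrac{n}{2}\sqrt{v_{n+1}^2-\lambda^2}\,s\big)$ (or its cotangent analogue for $\theta(0)=\pi$), the same half-angle identity gives rational integrands in $w$, and the substitution $ds=\tfrac{2}{n\sqrt{v_{n+1}^2-\lambda^2}}\cdot\tfrac{dw}{1-w^2}$ leads to elementary primitives expressible via $\log$ or $\arctanh$.

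The integrations are essentially bookkeeping exercises once the right substitution is identified, so I would organize the proof as four parallel computations, presenting in each case the substitution, the resulting rational integrand, the primitive, and the final explicit parametrization written in the variable $s$. The freedom under vertical translations, mentioned in the statement, will be used precisely to discard the additive constants appearing after integrating $z'(s)$; the constant from $x'(s)$ is absorbed into the choice of origin of $s$.

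The main obstacle I anticipate is not the integration itself but the consistent algebraic simplification of the resulting expressions so that each parametrization is displayed in a form that cleanly exhibits its dependence on $\lambda$ and $v_{n+1}$ and reduces, in the limit $\lambda\to v_{n+1}$, to the corresponding formula of the equality case. Care will also be needed with the branch of the inverse trigonometric functions in the $\lambda>v_{n+1}$ case, since $\theta(s)$ is monotone and covers a full period of $\kappa_\alpha$, so the formal primitive must be interpreted as the continuous extension across the singularities of $\tan$.
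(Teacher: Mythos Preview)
Your approach is correct and is precisely what the paper does: it integrates the third equation of \eqref{sistemadiferencial} to obtain $\theta(s)$ in each case, and then states that ``explicit integration'' of $x'=\cos\theta$ and $z'=\sin\theta$ yields the listed formulas. Your proposal simply supplies the computational details (half-angle identities and the natural substitutions) that the paper omits, so the two arguments coincide.
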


\begin{itemize}
\item[1.] Case $\lambda>v\n1$. The explicit coordinates of $\alpha(s)$ are:
$$
\begin{array}{l}
\vspace{.25cm}x(s)=-\lambda s+\frac{2}{n}\arctan\left(\sqrt{\frac{\lambda+v\n1}{\lambda-v\n1}}\tan\left(\frac{n}{2}\sqrt{\lambda^2-v\n1^2}s\right)\right),\\
z(s)=\frac{1}{n}\log\left(\lambda-\cos\left(n\sqrt{\lambda^2-v\n1^2}s\right)\right).
\end{array}	
$$
The angle function $\theta(s)$ is periodic, the $x(s)$-coordinate is unbounded and the $z(s)$-coordinate is also periodic. The curve $\alpha(s)$ self-intersects infinitely many times. 

\begin{figure}[H]
\centering
\includegraphics[width=.5\textwidth]{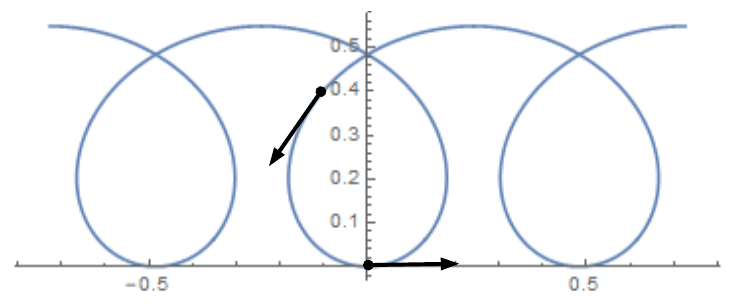}  
\caption{The profile curve for the case $\lambda>v\n1$. Here, $n=2$, $v\n1=1$ and $\lambda=2$.}
\label{lmayor1}
\end{figure}

\item[2.] Case $\lambda=v\n1$. 

\begin{itemize}
\item[2.1.] Either $\alpha(s)$ is a horizontal straight line parametrized by $x(s)=-s,\ z(s)=c_0,\ c_0\in\R,\ \theta(s)=\pi$, or

\item[2.2.] its explicit coordinates are
$$
\begin{array}{l}
\vspace{.25cm}x(s)=-s+\frac{n}{2}\arctan(ns),\\
z(s)=\frac{1}{n}\log(1+n^2s^2).
\end{array}
$$
The image of the angle function $\theta(s)$ in the circle $\S^1$ is $\S^1-\{(0,-1)\}$. The $z(s)$-coordinate decreases until reaching a minimum and then increases, and $\alpha(s)$ has a self-intersection.
\end{itemize}

\begin{figure}[H]
\centering
\includegraphics[width=.5\textwidth]{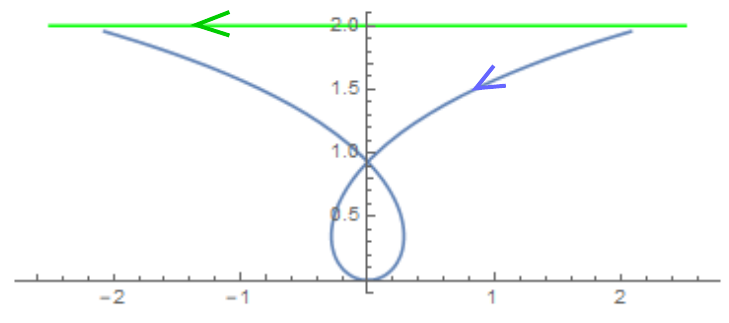}  
\caption{The profile curves for the case $\lambda=v\n1$. Here, $n=2$, $v\n1=1$ and $\lambda=1$.}
\label{ligual1}
\end{figure}

\item[3.] Case $\lambda<v\n1$.

\begin{itemize}
\item[3.1.] Either $\alpha(s)$ is a straight line parametrized by $x(s)=(\cos\theta_0)s,\ z(s)=\pm (\sin\theta_0)s,\ \theta(s)=\theta_0$, where $\theta_0$ is such that $\lambda+v\n1\cos\theta_0=0$, or

\item[3.2.] if $\theta(0)=0$, its explicit coordinates are
$$
\begin{array}{l}
\vspace{.25cm}x(s)=-\lambda s+\frac{2}{nv\n1}\arctan\left(\sqrt{\frac{v\n1+\lambda}{v\n1-\lambda}}\tanh\left(\frac{n}{2}\sqrt{v\n1^2-\lambda^2}s\right)\right),\\
z(s)=\frac{1}{nv\n1}\log\left(-\lambda+\cosh\left(n\sqrt{v\n1^2-\lambda^2}s\right)\right).
\end{array}
$$
In this case, $\alpha(s)$ has a self-intersection.

\item[3.3.] if $\theta(0)=\pi$, its explicit coordinates are
$$
\begin{array}{l}
\vspace{.25cm}x(s)=-\lambda s-\frac{2}{nv\n1}\arctan\left(\sqrt{\frac{v\n1+\lambda}{v\n1-\lambda}}\tanh\left(\frac{n}{2}\sqrt{v\n1^2-\lambda^2}s\right)\right),\\
z(s)=\frac{1}{nv\n1}\log\left(\lambda+\cosh\left(n\sqrt{v\n1^2-\lambda^2}s\right)\right).
\end{array}
$$
In this case, $\alpha(s)$ is a graph hence it is embedded.
\end{itemize}
In the two latter cases, the image of the angle function $\theta(s)$ of each curve is a connected arc in $\S^1$ whose endpoints are $(\cos\theta_0,\pm\sin\theta_0)$.

\begin{figure}[H]
\centering
\includegraphics[width=.6\textwidth]{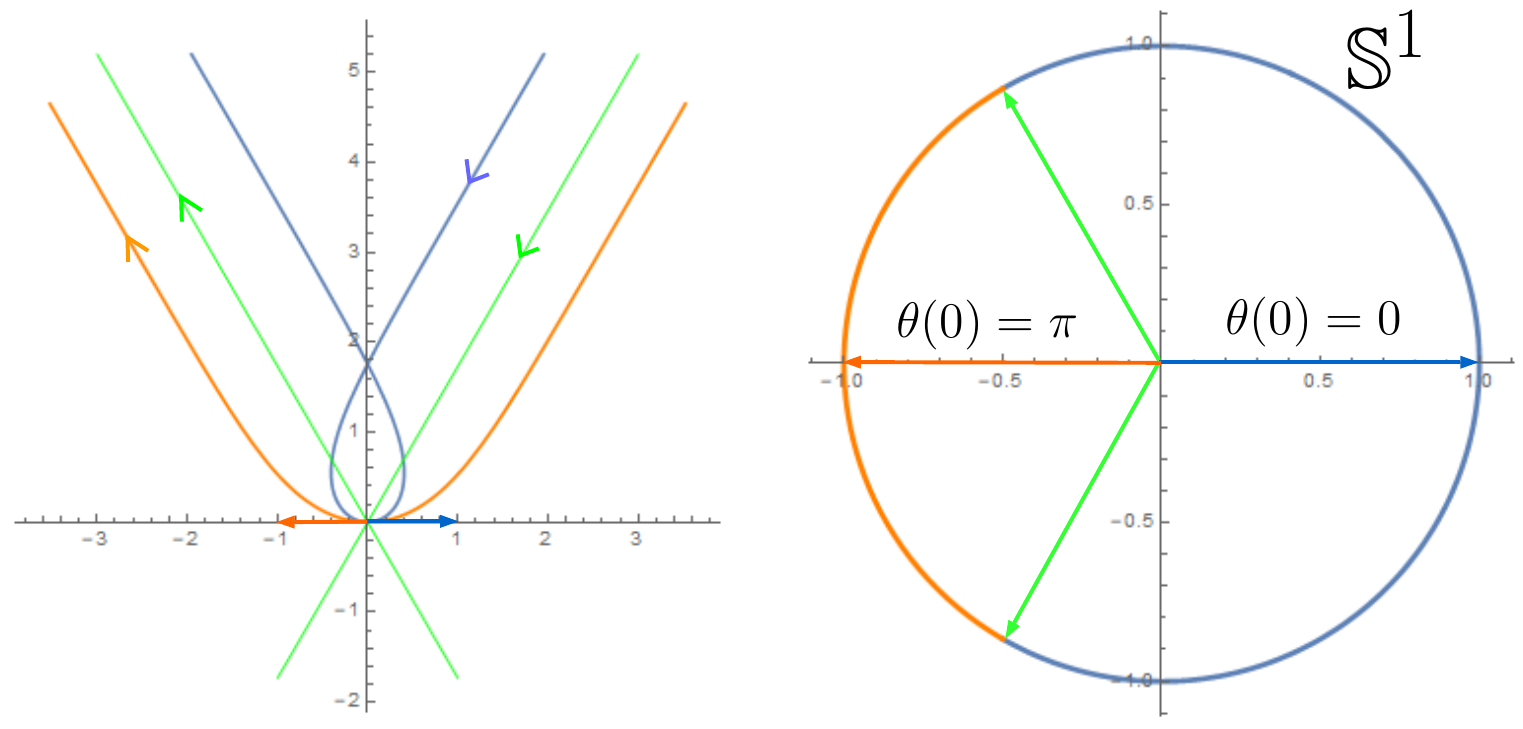}  
\caption{Left: the profile curves for the case $\lambda<v\n1$. In blue, the case $\theta(0)=0$; in orange, the case $\theta(0)=\pi$. Here, $n=2$, $v\n1=1$ and $\lambda=1/2$. Right: the values of $\theta(s)$ in $\S^1$ of each curve.}
\label{lmenor1}
\end{figure}
\end{itemize}


\section{\large The phase plane of rotational $\H_\lambda$-hypersurfaces}\label{properties}
\vspace{-.5cm}
This section is devoted to compile the main features of the phase plane for the study of rotational $\lHn$. To do so we follow \cite{BGM2}, where the phase plane was used to study rotational hypersurfaces of prescribed mean curvature given by Equation \eqref{prescribedMC}. 

%
%

%

Let us fix the notation. Firstly, observe that in contrast with cylindrical $\lHn$, where there was$C(\frac{n-1}{\lambda n})$ no a priori relation between the density vector and the ruling directions, for a rotational $\lH$ the density vector and the rotation axis must be parallel \cite[Proposition 4.3]{Lop}. Thus, after a change of Euclidean coordinates, we suppose that the density vector $v$ in Equation \eqref{defilambdasup} is $e\n1$. Then, we consider $\sig$ the rotational $\lH$ generated as the orbit of an arc-length parametrized curve
$$
\alpha(s)=(x(s),0,...,0,z(s)),\hspace{.5cm} s\in I\subset\R,
$$
contained in the plane $[e_1,e\n1]$ generated by the vectors $e_1$ and $e\n1$, under the isometric $SO(n)$-action of rotations that leave pointwise fixed the $x\n1$-axis. From now on, we will denote the coordinates of $\alpha(s)$ simply by $(x(s),z(s))$ and omit the dependence of the variable $s$, unless necessary. Note that the unit normal of $\alpha$ in $[e_1,e\n1]$, given by $\textbf{n}_\alpha=J\alpha'=(-z',x')$, induces a unit normal to $\sig$ by just rotating $\textbf{n}_\alpha$ around the $x\n1$-axis, and the principal curvatures of $\sig$ with respect to this unit normal are given by
$$
\kappa_1=\kappa_\alpha=x'z''-x''z',\hspace{.5cm} \kappa_2=\cdots=\kappa_n=\frac{z'}{x}.
$$
Consequently, the mean curvature $H_\sig$ of $\sig$, which satisfies \eqref{defilambdasup}, is related with $x$ and $z$ by
\begin{equation}\label{odemedia}
nH_\sig=n(x'+\lambda)=x'z''-x''z'+(n-1)\frac{z'}{x}.
\end{equation} 

As $\alpha$ is arc-length parametrized, it follows that $x$ is a solution of the second order autonomous ODE:
\begin{equation}\label{odex}
x''=(n-1)\frac{1-x'^2}{x}-n\varepsilon(x'+\lambda)\sqrt{1-x'^2}, \hspace{1cm} \varepsilon ={\rm sign}(z'),
\end{equation}
on every subinterval $J\subset I$ where $z'(s)\neq 0$ for all $s\in J$. Here, the value $\varepsilon$ denotes whether the height of $\alpha$ is increasing (when $\varepsilon=1$) or decreasing (when $\varepsilon=-1$).

After the change $x'=y$, \eqref{odex} transforms into the first order autonomous system
\begin{equation}\label{1ordersys}
\left(\begin{array}{c}
x\\
y
\end{array}\right)'=\left(\begin{array}{c}
y\\
(n-1)\frac{\displaystyle{1-y^2}}{\displaystyle{x}}-n\varepsilon(y+\lambda)\sqrt{1-y^2}
\end{array}\right).
\end{equation}

The \emph{phase plane} is defined as the half-strip $\Theta_\varepsilon:=(0,\infty)\times(-1,1)$, with coordinates $(x,y)$ denoting, respectively, the distance to the axis of rotation and the \emph{angle function} of $\sig$. The \emph{orbits} are the solutions $\gamma(s)=(x(s),y(s))$ of system \eqref{1ordersys}. Both the local and global behavior of an orbit in $\Theta_\varepsilon$ are strongly influenced by the underlying geometric properties of Equation \eqref{defilambdasup}. For example, since the profile curve $\alpha$ of a rotational $\lH$ only intersects the axis of rotation orthogonally, see e.g. \cite[Theorem 4.1, pp. 13-14]{BGM2}, an orbit in $\Theta_\varepsilon$ cannot converge to a point $(x_0,y_0)$ with $x_0=0,\ y_0\in (-1,1)$. 

Next, we highlight some consequences of the study of the phase plane carried out in Section 2 in \cite{BGM2} adapted to our particular case.

\begin{lem}\label{resumenfases}
For each $\lambda>0$:
\begin{itemize}
\item[1.] There is a unique equilibrium of \eqref{1ordersys} in $\Theta_1$ given by $e_0:=\left(\frac{n-1}{\lambda n},0\right)$. This equilibrium generates the constant mean curvature, flat cylinder of radius $\frac{n-1}{\lambda n}$ and vertical rulings. 

\item[2.] The Cauchy problem associated to system \eqref{1ordersys} for the initial condition $(x_0,y_0)\in\Theta_\varepsilon$ has local existence and uniqueness. Consequently, the orbits provide a foliation of regular, proper, $C^1$ curves of $\Theta_\varepsilon-\{e_0\}$, and two distinct orbits cannot intersect in $\Theta_\varepsilon$. Moreover, by uniqueness of the Cauchy problem \eqref{1ordersys}, if an orbit $\gamma(s)$ converges to $e_0$, the value of the parameter $s$ goes to $\pm\infty$.

\item[3.] The points of $\alpha$ with $\kappa_\alpha=0$ are the ones where $y'=0$. They are located in $\Gamma_\varepsilon:=\Theta_\varepsilon\cap\{x=\Gamma_\varepsilon(y)\}$, where 
\begin{equation}\label{curvagamma}
\Gamma_{\varepsilon}(y)=\frac{(n-1)\sqrt{1-y^2}}{n \varepsilon(y+\lambda)},
\end{equation}
and $\varepsilon(y+\lambda)>0$.

\item[4.] The axis $y=0$ and $\Gamma_\varepsilon$ divide $\Theta_\varepsilon$ into connected components where the coordinate functions of an orbit $(x(s),y(s))$ are monotonous. Thus, at each of these monotonicity regions, the motion of an orbit is uniquely determined.


\item[5.] If an orbit $(x(s),y(s))$ intersects $\Gamma_\varepsilon$, the function $y(s)$ has a local extremum; if an orbit intersects the axis $y=0$, it does orthogonally. 
%
\end{itemize}
\end{lem}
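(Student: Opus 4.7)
The plan is to dispatch the five items in order by direct inspection of system \eqref{1ordersys}, specializing the phase-plane analysis of \cite{BGM2} to the linear prescribed function $\H_\lambda$.

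For item 1, I would set both components of the right-hand side of \eqref{1ordersys} to zero. The first gives $y=0$, and the second reduces to $(n-1)/x=n\varepsilon\lambda$; since $x>0$ and $\lambda>0$, only $\varepsilon=1$ yields a solution, producing the unique equilibrium $e_0=((n-1)/(n\lambda),0)$. Because $y\equiv 0$ forces $x$ constant along the orbit, the profile curve is a vertical segment and its $SO(n)$-orbit is a round cylinder of radius $(n-1)/(n\lambda)$; substitution in \eqref{odemedia} confirms mean curvature $\lambda$. For item 2, the right-hand side of \eqref{1ordersys} is $C^1$ on the open half-strip $\Theta_\varepsilon$, so Picard--Lindel\"of gives local existence and uniqueness, from which the foliation property and non-crossing follow. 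The final assertion I would argue by contradiction: if a non-constant orbit reached $e_0$ at a finite parameter value, uniqueness at $e_0$ would force it to coincide with the constant equilibrium orbit.

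Items 3 and 4 reduce to algebra. Solving $y'=0$ inside $\Theta_\varepsilon$ (with $|y|<1$ and $x>0$) yields the graph $x=\Gamma_\varepsilon(y)$ in \eqref{curvagamma}, with the sign constraint $\varepsilon(y+\lambda)>0$ inherited from $x>0$; the equivalence $y'=0 \iff \kappa_\alpha=0$ then follows from $\kappa_\alpha=x'z''-x''z'$ combined with the arc-length identity $x'x''+z'z''=0$, which forces $z''=0$ (hence $\kappa_\alpha=0$) at points where $y'=x''=0$ and $z'\ne 0$. The two curves $\{y=0\}=\{x'=0\}$ and $\Gamma_\varepsilon=\{y'=0\}$ therefore partition $\Theta_\varepsilon$ into open regions on which $x'$ and $y'$ keep constant sign, which is the monotonicity statement. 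For item 5, at a point of $\{y=0\}$ different from $e_0$ the tangent $(y,y')$ reduces to $(0,y')$ with $y'\ne 0$, giving orthogonal intersection with the axis; at an intersection with $\Gamma_\varepsilon$, the orbit tangent $(y,0)$ is horizontal while $\Gamma_\varepsilon$, being a graph over $y$, has nowhere-horizontal tangent, so the crossing is transverse, $y'$ flips sign, and $y$ attains a local extremum.

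The main subtlety I anticipate is the second half of item 2: turning ``$\gamma(s)$ converges to $e_0$'' into ``the parameter $s$ must diverge'', which is the standard fact that an equilibrium of a locally Lipschitz vector field cannot be reached in finite time by a distinct orbit but still requires a careful appeal to uniqueness rather than a direct computation. The remaining steps are essentially routine planar-system bookkeeping.
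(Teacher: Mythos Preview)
Your proposal is correct and follows the same approach the paper takes: the paper does not give an independent proof of this lemma but simply records the five items as specializations of the general phase-plane analysis in \cite[Section~2]{BGM2} to the linear choice $\H_\lambda$, which is exactly how you frame your argument. Your write-up goes a bit further by spelling out the routine verifications (the equilibrium computation, Picard--Lindel\"of, the algebra for $\Gamma_\varepsilon$, and the transversality argument for item~5) that the paper leaves implicit; the only cosmetic point is that in item~3 you argue $y'=0\Rightarrow\kappa_\alpha=0$ explicitly, whereas the clean two-sided statement follows at once from the identity $\kappa_\alpha=-x''/z'$ obtained from the arc-length relation.
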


Finally, recall that system \eqref{1ordersys} has a singularity for the values $x_0=0,\ y_0=\pm 1$, hence we cannot ensure the existence of a rotational $\lH$ intersecting orthogonally the axis of rotation by solving the Cauchy problem with this initial data. However, we can guarantee the existence of such a rotational $\lH$ by solving the Dirichlet problem over a small-enough domain, see \cite[Corollary 1]{Mar}. Now, Corollary 2.4 in \cite{BGM2} has the following implication in our phase plane study:

\begin{lem}\label{existorbitaext}
Let $\varepsilon,\delta\in\{-1,1\}$ be such that $\varepsilon(\delta+\lambda)>0$. Then, there exists a unique orbit in $\Theta_\varepsilon$ that has $(0,\delta)\in\overline{\Theta_\varepsilon}$ as an endpoint. There is no such an orbit in $\Theta_{-\varepsilon}$.
\end{lem}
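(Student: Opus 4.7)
The strategy is purely local at the boundary point $(0,\delta)\in\overline{\Theta_\varepsilon}$. The plan combines existence and uniqueness for the Dirichlet problem of the prescribed mean curvature equation with a sign computation that identifies the phase plane in which the resulting orbit lives.

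For existence, I would invoke \cite[Corollary 2.4]{BGM2} (which in turn rests on \cite[Corollary 1]{Mar}) to produce a rotational $\lH$ whose profile curve $\alpha$, parametrized by arc length, satisfies $\alpha(0)=(0,z_0)$ on the axis and $\alpha'(0)=\delta e_1$. Smoothness of $\alpha$ at $s=0$ forces $z'(0)=0$, and differentiating $|\alpha'|^2=1$ at $s=0$ yields $x''(0)=0$.

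To pin down the forced value of $\varepsilon$, I would compute the second-order jet of $\alpha$ at the axis. Using $\kappa_\alpha=x'z''-x''z'$ together with L'H\^opital's rule to evaluate $\kappa_2=z'/x$ at $s=0$, one finds $H_\sigma(0)=\delta z''(0)$. Since the Gauss map at the axis point is $\eta(0)=\delta e\n1$, the prescribing relation \eqref{defilambdasup} reads $H_\sigma(0)=\delta+\lambda$, and therefore $z''(0)=1+\delta\lambda$. The orbit lives on the side of $s=0$ with $x(s)>0$, namely $s>0$ when $\delta=1$ and $s<0$ when $\delta=-1$; since $z'(0)=0$, the sign of $z'(s)$ on that side equals $\mathrm{sign}(\delta(1+\delta\lambda))=\mathrm{sign}(\delta+\lambda)$. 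A direct case check on the four pairs $(\delta,\mathrm{sign}(\lambda-1))$ shows that this coincides with the prescribed $\varepsilon$ precisely when $\varepsilon(\delta+\lambda)>0$, and hence the orbit produced by the rotational $\lH$ lies in $\Theta_\varepsilon$.

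The very same computation is forced on \emph{any} orbit in $\overline{\Theta_{\pm\varepsilon}}$ approaching $(0,\delta)$, since $z''(0)$ is determined by $\delta$ and $\lambda$ alone; this rules out any orbit in $\Theta_{-\varepsilon}$. Uniqueness in $\Theta_\varepsilon$ then follows by combining the uniqueness side of the Dirichlet problem (two such orbits share their profile curves in a neighborhood of the axis) with the Cauchy uniqueness for \eqref{1ordersys} in $\Theta_\varepsilon\setminus\{e_0\}$ provided by Lemma \ref{resumenfases}(2), which propagates the local coincidence to the whole orbit. The main delicate point I expect is the sign bookkeeping in the case $\delta=-1$, where the side $s<0$ and the sign of $1-\lambda$ must be tracked carefully in order to extract the clean inequality $\varepsilon(\delta+\lambda)>0$.
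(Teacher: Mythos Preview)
Your proposal is correct and follows the same route as the paper, which does not spell out a proof but simply records the lemma as a direct consequence of \cite[Corollary~2.4]{BGM2} (together with the Dirichlet existence from \cite[Corollary~1]{Mar}). Your computation of $z''(0)=1+\delta\lambda$ and the identification $\varepsilon=\mathrm{sign}(z')=\mathrm{sign}\big(\delta(1+\delta\lambda)\big)=\mathrm{sign}(\delta+\lambda)$ is exactly the sign bookkeeping that underlies that citation, so you have effectively unfolded what the paper leaves implicit.
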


%

\section{\large Classification of rotational $\H_\lambda$-hypersurfaces}\label{rot}
\vspace{-.5cm}

Throughout this section we classify rotational $\lHn$ depending on the value of $\lambda$. As a first approach to arise such a classification, we must mention a technical, useful in the later, result which establishes that no closed examples exist in the class of immersed $\lHn$. In particular, the case $n=2$ was originally compiled in López \cite{Lop} and its proof can be  easily extended to any dimension.

\begin{lem}\label{noclosed}
There do not exist closed $\lHn$.
\end{lem}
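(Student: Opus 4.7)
My plan is to argue by contradiction from two complementary integral identities on a putative closed $\H_\lambda$-hypersurface $\sig$, oriented by a unit normal $\eta$. Combined with the defining relation $H_\sig = \langle\eta,v\rangle + \lambda$, these identities will force $\langle\eta,v\rangle \equiv 0$ on $\sig$, which is impossible at a point where the height function attains its maximum.

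The first identity I would establish is
$$\int_\sig H_\sig \langle\eta,v\rangle\, d\sigma = 0.$$
It follows from the pointwise computation $\mathrm{div}_\sig v = \mathrm{div}_\sig v^\top - n H_\sig \langle\eta,v\rangle$, obtained by splitting $v = v^\top + \langle\eta,v\rangle\eta$ along $\sig$; the left-hand side vanishes because $v$ is a parallel ambient vector field, and integration over the closed manifold $\sig$ kills the tangential divergence. Equivalently, this records that the first variation of area along the isometric translation $p \mapsto p + tv$ is zero.

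The second identity is
$$\int_\sig \langle\eta,v\rangle\, d\sigma = 0,$$
which I would derive via Stokes' theorem applied to the $n$-form $\omega := \iota_v(dx^1 \wedge \cdots \wedge dx^{n+1})$ on $\rnn$. This form is closed because the Lie derivative of the ambient volume form along the parallel field $v$ vanishes, and exact because $\rnn$ is contractible. Its pullback to $\sig$ agrees, up to sign, with $\langle\eta,v\rangle\, d\sigma$, so Stokes on the closed manifold $\sig$ yields the claim. This route avoids invoking an enclosed region, so it remains valid when $\sig$ is merely immersed rather than embedded.

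Inserting $H_\sig = \langle\eta,v\rangle + \lambda$ into the first identity and using the second,
$$0 = \int_\sig (\langle\eta,v\rangle + \lambda)\langle\eta,v\rangle\, d\sigma = \int_\sig \langle\eta,v\rangle^2\, d\sigma,$$
forcing $\langle\eta,v\rangle \equiv 0$ on $\sig$. By compactness the height $h(p) = \langle p,v\rangle$ attains a maximum at some $p_0 \in \sig$, where its tangential gradient $v - \langle\eta,v\rangle\eta$ must vanish; hence $\eta(p_0) = \pm v$ and $|\langle\eta,v\rangle(p_0)| = 1$, contradicting $\langle\eta,v\rangle \equiv 0$. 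The proof is short and I foresee no genuine analytic obstacle---the delicate steps are simply the sign bookkeeping in identifying the pullback of $\omega$ with $\langle\eta,v\rangle\, d\sigma$ and the verification of the tangential divergence formula, both of which are standard.
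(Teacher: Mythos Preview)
Your argument is correct. Both integral identities are valid on a closed immersed hypersurface: the first is the divergence theorem applied to the tangential part of $v$ (equivalently, $\Delta_\sig\langle p,v\rangle = nH_\sig\langle\eta,v\rangle$ integrated over $\sig$), and the second follows exactly as you say from Stokes applied to the exact $n$-form $\iota_v\,dV$, which indeed pulls back to $\langle\eta,v\rangle\,d\sigma$ up to a sign. Substituting $H_\sig=\langle\eta,v\rangle+\lambda$ then forces $\langle\eta,v\rangle\equiv 0$, and the maximum-of-height argument gives the contradiction.

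As for comparison: the paper does not actually prove Lemma~\ref{noclosed}. It simply records that the case $n=2$ is in L\'opez~\cite{Lop} and asserts that the proof extends verbatim to higher dimension. Your write-up supplies precisely that extension, and the two integral identities you use are the standard ones behind L\'opez's argument (the first is the first-variation/flux formula for the height function, the second the vanishing flux of a constant field through a closed hypersurface). So your approach is essentially the intended one, with the added merit of being self-contained and of handling the immersed case cleanly via the exact-form argument rather than appealing to an enclosed region.
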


At this point, we are going to study the aforementioned classification by analyzing the qualitative properties of system \eqref{1ordersys}, most of them already studied in the previous section. To this end, it is useful to study its \emph{linearized} system at the unique equilibrium $e_0=\big(\frac{n-1}{\lambda n},0\big)$. In particular, the linearized of \eqref{1ordersys} at $e_0$ is given by
\begin{equation}
\left(\begin{matrix}
0&1\\
\displaystyle{-\frac{n^2\lambda^2}{n-1}}& -n
\end{matrix}\right),
\end{equation}
whose eigenvalues are
$$
\mu_1=\frac{-n+ n\sqrt{1-\displaystyle{\frac{4\lambda^2}{n-1}}}}{2},\hspace{1cm}\text{and} \hspace{1cm} \mu_2=\frac{-n- n\sqrt{1-\displaystyle{\frac{4\lambda^2}{n-1}}}}{2}.
$$

Standard theory of non-linear autonomous systems enables us to summarize the possible beha-
viors of a solution around the equilibrium $e_0$:
\begin{itemize}
\item if $\lambda>\sqrt{n-1}/2$, then $\mu_1$ and $\mu_2$ are complex conjugate with negative real part. Thus, $e_0$ has an \emph{inward spiral} structure, and every orbit close enough to $e_0$ converges asymptotically to it spiraling around infinitely many times.

\item if $\lambda=\sqrt{n-1}/2$, then $\mu_1=\mu_2$ and they are real and negative, with only one eigenvector. Thus, $e_0$ is an asymptotically stable improper node, and every orbit close enough to $e_0$ converges asymptotically to it, maybe spiraling around a finite number of times.

\item if $\lambda\in (0,\sqrt{n-1}/2)$, then $\mu_1$ and $\mu_2$ are different, real and negative. Thus, $e_0$ is an asymptotically stable node and has a \emph{sink} structure, and every orbit close enough to $e_0$ converges asymptotically to it \emph{directly}, i.e. without spiraling around.
\end{itemize}

\begin{figure}[H]
\centering
\includegraphics[width=.7\textwidth]{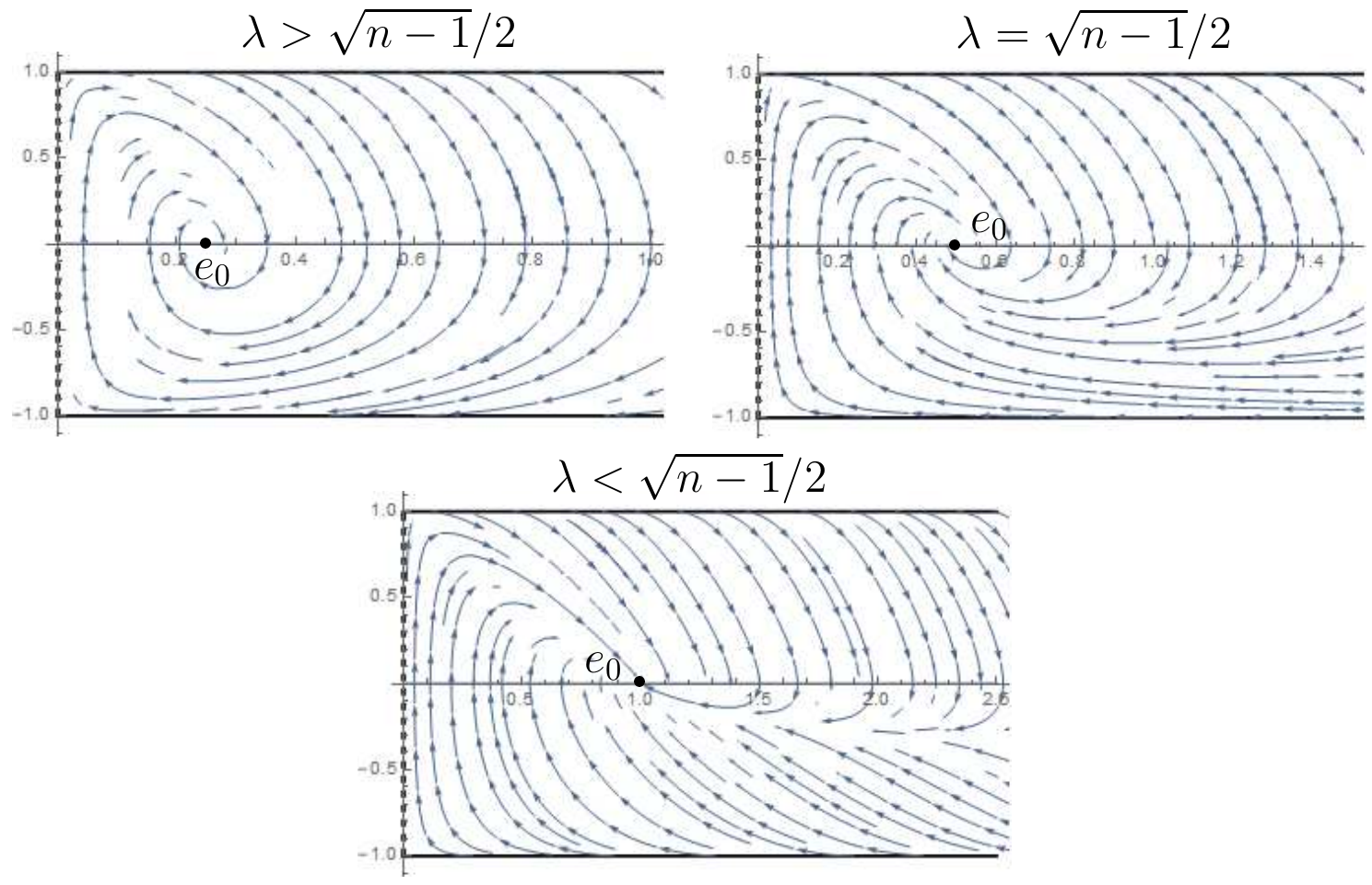}
\caption{The linearized of system \eqref{1ordersys} depending on the values of $\lambda>0$ and the behavior of its orbits.}
\label{linearizados}
\end{figure}

We now analyze the rotational $\lHn$ in $\rnn$ by distinguishing three possibilities for $\lambda$: $\lambda>1$, $\lambda=1$ and $\lambda<1$. These three cases will deeply influence the global behavior of the orbits in each phase plane. Additionally, in our discussion we take into account if such hypersurfaces intersect orthogonally the axis of rotation or not. 

\begin{center}
{\Large \textbf{\underline{Case $\lambda>1$}}}	
\end{center}
\vspace{-.25cm}

Let us assume $\lambda>1$. On the one hand, for $\varepsilon=1$, the curve $\Gamma_1$ given by Equation \eqref{curvagamma} is a compact, connected arc in $\Theta_1$ joining the points $(0,1)$ and $(0,-1)$. In order to study the monotonicity regions in $\Theta_1$, let us consider an arc-length parametrized curve
$\alpha(s)=(x(s),z(s))$ satisfying \eqref{odex} and $\gamma(s)$ the corresponding orbit that solves \eqref{1ordersys}. Combining items $\textit{3}$ and $\textit{4}$ in Lemma \ref{resumenfases} we can ensure that in $\Theta_1$ there are four monotonicity regions which will be called $\Lambda_1,...,\Lambda_4$, respectively (see Figure \ref{lmayor1}, left). Moreover, if the orbit $\gamma$ is contained in $\Lambda_1\cup\Lambda_2$, it corresponds to points of $\alpha$ with positive geodesic curvature, whereas, if on the contrary, $\gamma$ is contained in $\Lambda_3\cup\Lambda_4$, it corresponds to points of $\alpha$ with negative geodesic curvature.

On the other hand, for $\varepsilon=-1$, the curve $\Gamma_{-1}$ does not exist in $\Theta_{-1}$, and so there are only two monotonicity regions in $\Theta_{-1}$ called $\Lambda_+$ and $\Lambda_-$ (see Figure \ref{lmayor1}, right). In this case both regions correspond to points of $\alpha$ with positive geodesic curvature.

%

\begin{figure}[H]
\centering
\includegraphics[width=.9\textwidth]{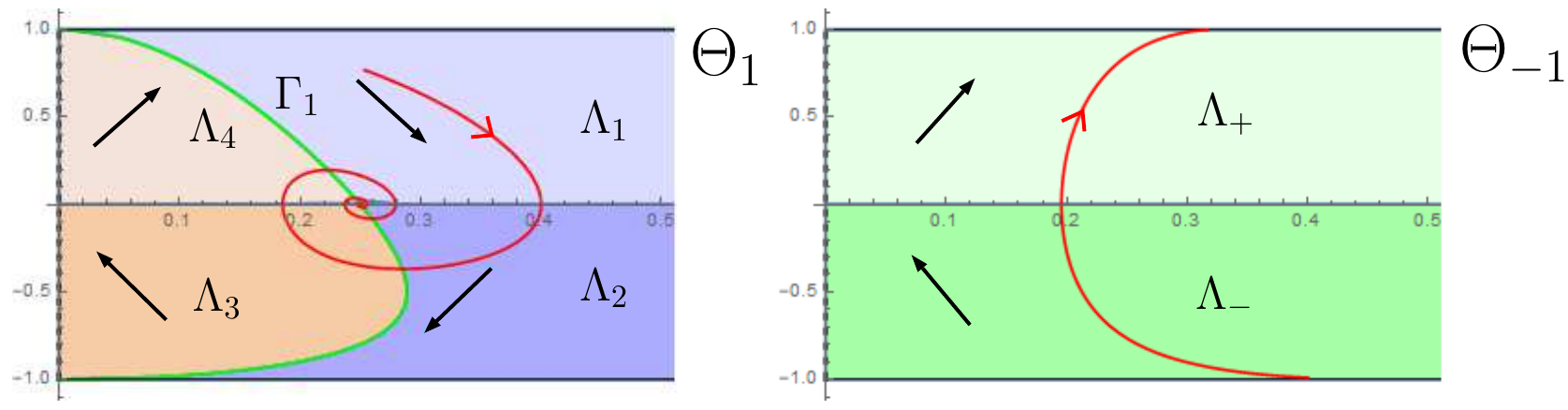}
\caption{The phase planes $\Theta_\varepsilon,\ \varepsilon=\pm1$ for $\lambda>1$, their monotonicity regions and two orbits following the motion at each monotonicity region.}
\label{lmayor1}
\end{figure}

Our first goal is to describe the rotational $\lHn$ that intersect orthogonally the axis of rotation. By Lemma \ref{existorbitaext} there is an orbit $\gamma_+(s)$ in $\Theta_1$ having $(0,1)$ as endpoint, and after a translation in $s$ we can suppose that $\gamma_+(0)=(0,1)$. This orbit generates an arc-length parametrized curve $\alpha_+(s)=(x_+(s),z_+(s))$ that intersects orthogonally the axis of rotation at the instant $s=0$. Since $\lambda>1$, by ODE \eqref{odemedia} we see that $z''_+(0)>0$ and so $z_+(s)$ has a minimum at $s=0$. As a matter of fact, for $s>0$ close enough to $s=0$ we have $z_+'(s)>0$ which implies that $x_+''(s)<0$. In particular, the geodesic curvature $\kappa_{\alpha_+}(s)$ of $\alpha_+$ is positive and so the orbit $\gamma_+(s)$ is strictly contained in the region $\Lambda_1$ for $s>0$ close enough to $s=0$. See Figure \ref{saleneje} where the orbit $\gamma_+$ and the curve $\alpha_+$ are ploted in red.

Once again, by Lemma \ref{existorbitaext} there is an orbit $\gamma_-(s)$ in $\Theta_1$ with $(0,-1)$ as endpoint. Such an orbit also generates an arc-length parametrized curve $\alpha_-(s)=(x_-(s),z_-(s))$ that intersects orthogonally the axis of rotation at $s=0$. A similar discussion as above yields that $z_-''(0)<0$ and so $z_-(s)$ has a maximum at $s=0$. Thus, for $s<0$ we have $z_-'(s)>0$ which implies again that $x_-''(s)<0$. This time, $\gamma_-(s)$ is strictly contained in the region $\Lambda_2$ for $s<0$ close enough to $s=0$. See Figure \ref{saleneje} where the orbit $\gamma_-$ and the curve $\alpha_-$ are ploted in orange.

\begin{figure}[H]
\centering
\includegraphics[width=.9\textwidth]{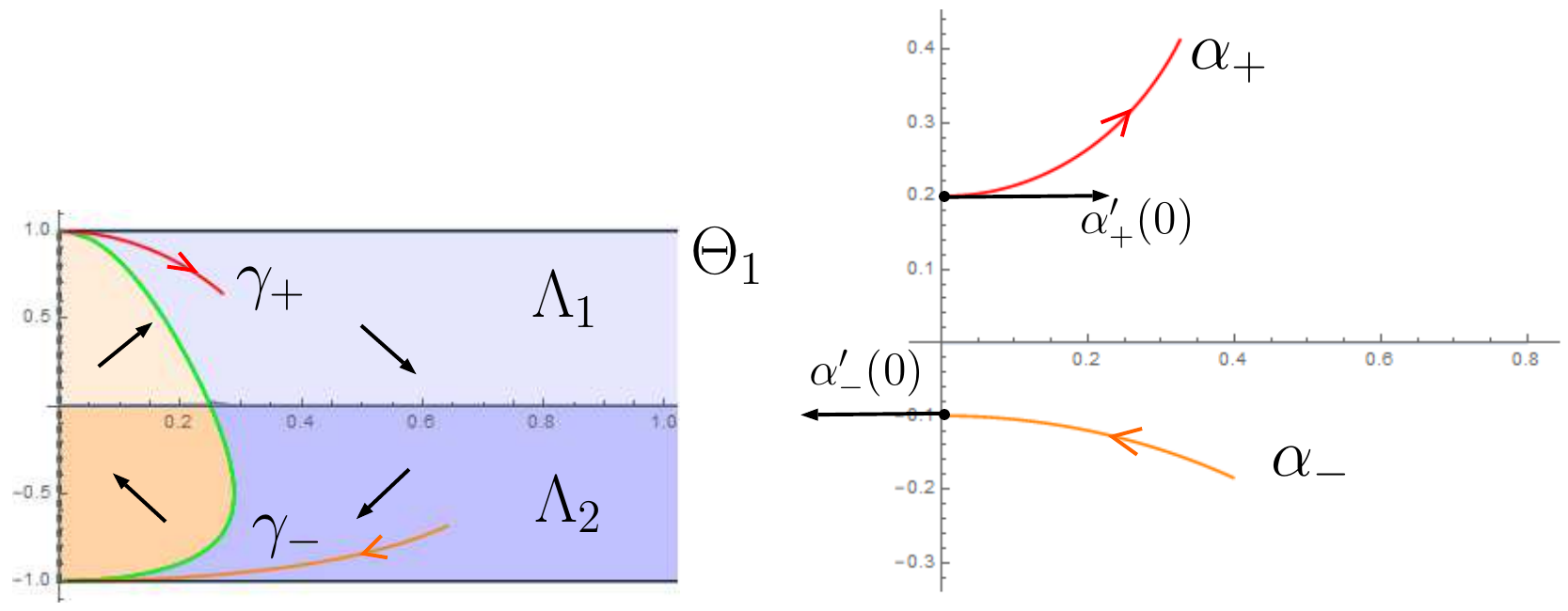}
\caption{Left: the phase plane $\Theta_1$ and the orbits $\gamma_+$ and $\gamma_-$. Right: the corresponding arc-length parametrized curves $\alpha_+$ and $\alpha_-$.}
\label{saleneje}
\end{figure}


Let us study in more detail the behavior of both orbits $\gamma_+$ and $\gamma_-$ in $\Theta_1$.

\begin{pro}\label{contradicecomparacioncurvaturamedia}	
Let us consider the orbits $\gamma_+$ and $\gamma_-$ in the phase plane $\Theta_1$ as above. Then:
	\begin{itemize}
		\item[1.] The orbit $\gamma_+(s)$ cannot stay forever in $\Lambda_1$. Moreover, it converges orthogonally to a point $(x_+,0)$ with $x_+\geq\frac{n-1}{\lambda n}$, which can be either the equilibrium $e_0$ with the parameter $s\rightarrow\infty$, or a finite point reaching it at some finite instant $s_+>0$.
		\item[2.] The orbit $\gamma_-(s)$ cannot stay forever in $\Lambda_2$.
		Moreover, it intersects orthogonally the axis $y=0$ at a point $(x_-,0)$ with $x_->\frac{n-1}{\lambda n}$ reaching it at some finite instant $s_-<0$.
		\item[3.] The points $(x_+,0)$ and $(x_-,0)$ are different. In fact, $x_+<x_-$.
	\end{itemize}
\end{pro}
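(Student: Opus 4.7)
My approach combines the monotonicity of the coordinate functions in the regions $\Lambda_1$ and $\Lambda_2$, uniqueness of the Cauchy problem \eqref{1ordersys}, and the asymptotic stability of $e_0$ established earlier in this section. The qualitative analyses of items 1 and 2 are parallel; item 3 is the main obstacle and ultimately rests on Lemma \ref{noclosed}.

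For item 1, I would use that in $\Lambda_1$ one has $x'=y>0$ and $y'<0$, so along $\gamma_+$ the coordinate $x$ is strictly increasing and $y$ strictly decreasing while $\gamma_+$ remains in $\Lambda_1$. First I would show that $x$ must stay bounded: if $x(s)\to\infty$, then $y$ has a limit $y_\infty\in[0,1)$ and the ODE forces
\[
y'(s)\longrightarrow -n(y_\infty+\lambda)\sqrt{1-y_\infty^2}<0,
\]
so $y\to-\infty$, a contradiction. Monotonicity and boundedness then give limits $x_+\in(0,\infty)$ and $y_+\in[0,1)$, and $y_+=0$ (otherwise $x'=y$ stays bounded away from $0$, hence $x$ blows up). Thus $\gamma_+$ converges to $(x_+,0)$, either at a finite $s_+$, with orthogonal crossing of $y=0$ by item~5 of Lemma~\ref{resumenfases}, or only as $s\to\infty$, in which case $(x_+,0)$ must be an equilibrium and therefore equal to $e_0$. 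Passing to the limit in $y'\leq 0$ at the crossing yields $(n-1)/x_+-n\lambda\leq 0$, that is $x_+\geq\frac{n-1}{\lambda n}$.

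For item 2, the argument is symmetric, running $\gamma_-$ backward from $s=0$ in $\Lambda_2$, where $x'<0$ and $y'<0$. The extra ingredient is that $e_0$ is asymptotically stable, since all eigenvalues of the linearized system have negative real part as discussed just above; consequently no non-constant orbit can converge to $e_0$ as $s\to-\infty$. Hence $\gamma_-$ must cross $y=0$ orthogonally at a finite $s_-<0$, and the inequality $x_->\frac{n-1}{\lambda n}$ is strict because $(x_-,0)=e_0$ would require $s_-=-\infty$.

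For item 3, I plan to rule out $x_+\geq x_-$ in two stages. If $x_+=x_-$, uniqueness of \eqref{1ordersys} at the common crossing $(x_+,0)=(x_-,0)$ identifies $\gamma_+$ (forward-extended past $s_+$) with $\gamma_-$ as two arcs of a single orbit, so the profile curve runs from the rotation axis at $(0,z_+(0))$ up to $(x_+,z_+(s_+))$, across the equator, and back down to the axis at $(0,z_-(0))$, generating a closed rotational $\H_\lambda$-hypersurface, which contradicts Lemma \ref{noclosed}. For $x_+>x_-$, I would parametrize both $\gamma_+$ and the backward extension of $\gamma_-$ as graphs $x=X(y)$ over an interval of $y\in(0,1)$, which is legitimate because $y$ is strictly monotone in $\Lambda_1$. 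Non-intersection of distinct orbits forces $X_{\gamma_-}(y)<X_{\gamma_+}(y)$ throughout the common domain, and since $X_{\gamma_+}(1)=0$ the backward extension of $\gamma_-$ can only reach $(0,1)$, reducing to the equality case by uniqueness, or exit $\Lambda_1$ backward through $\Gamma_1$ into $\Lambda_4$. The latter alternative is the real technical obstacle: one must track the orbit through $\Lambda_4$ and rule out each admissible continuation, using the impossibility of reaching the singular boundary $y=\pm 1$ in finite or infinite time with $x$ bounded away from $0$, which is where I expect the main difficulty to lie.
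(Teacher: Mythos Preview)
Your arguments for items 1 and 2 are correct but take a genuinely different route from the paper. For item 1, the paper argues geometrically: if $\gamma_+$ stayed in $\Lambda_1$ with $x\to\infty$, the rotational hypersurface $\Sigma_+$ would be a strictly convex entire graph over $\R^n$ with mean curvature bounded below by a positive constant, and placing a CMC sphere $\S^n(1/H_0)$ tangent from above and applying the mean curvature comparison principle yields a contradiction. Your direct estimate on the limit of $y'$ is more elementary and avoids the comparison principle entirely. For item 2, the paper rules out $(x_-,0)=e_0$ by tracking the position of $\gamma_-$ relative to the maximum of $\Gamma_1$ (attained at $y_0=-1/\lambda$), whereas your appeal to the asymptotic stability of $e_0$ is shorter and more conceptual.

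For item 3 your treatment of the equality $x_+=x_-$ via Lemma~\ref{noclosed} matches the paper exactly. The gap is in the strict inequality, and the ``technical obstacle'' you anticipate---chasing $\gamma_-$ backward through $\Lambda_4$ and excluding boundary limits at $y=\pm 1$ case by case---is not how the matter is settled. The decisive observation is precisely the one you already deployed in item 2. Assuming $x_+>x_-$, the backward continuation of $\gamma_-$ from $(x_-,0)$ is trapped in the bounded region of $\Theta_1$ enclosed by $\gamma_+$, by the forward arc of $\gamma_-$ in $\Lambda_2$, and by $\{0\}\times(-1,1)$ (which no orbit can reach, as noted after Lemma~\ref{resumenfases}). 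Unable to self-intersect or to cross $\gamma_+$, the orbit can only wind through $\Lambda_1,\Lambda_4,\Lambda_3,\ldots$ and, by properness in $\Theta_1\setminus\{e_0\}$, must converge to $e_0$ as $s\to-\infty$. But asymptotic stability of $e_0$ forbids backward convergence of a non-constant orbit, and this is the contradiction the paper uses. You had the right tool in hand from item 2; the only missing step was to recognise that it closes item 3 as well.
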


\begin{proof}
\textit{1.} Arguing by contradiction, suppose that $\gamma_+(s)\subset\Lambda_1,\ \forall s>0$. Recall that $\gamma_+(0)=(0,1)$ and $\gamma_+(s)\subset\Lambda_1$ for $s>0$ small enough, hence the monotonicity properties of $\Lambda_1$ ensure that $\gamma_+$ can be expressed as a graph $y=f(x)$ with $f(x)$ satisfying $f(0)=0$ and $f'(x)<0$, for $x>0$ small enough. Consequently, since the orbits are proper curves in $\Theta_1$, $\gamma_+$ would be globally defined by the graph of $f(x)$ satisfying $f'(x)<0\, \forall x>0$ and $\lim_{x\rightarrow\infty}f(x)=c_0\geq 0$. Thus, the curve $\alpha_+(s)=(x_+(s),z_+(s))$ generated by $\gamma_+$ has positive geodesic curvature with $x_+'(s)>0,\ \forall s>0$ (since $\gamma_+$ lies over the axis $x'=y=0$). 

In this way, the $\lH$ $\sig_+$ generated by rotating $\alpha_+$ around the $x\n1$-axis is a strictly convex, entire graph over $\R^n$, whose mean curvature function is $H_{\sig_+}(p)=\langle\eta_p,e\n1\rangle+\lambda$ at each $p\in\sig_+$. Since $\lambda>1$, there exists a positive constant $H_0\in\R$ such that $H_{\sig_+}> H_0>0$. From here, as we can find a tangent point of intersection between the sphere $\S^n(1/H_0)$ of constant mean curvature equal to $H_0$ and $\sig_+$ in such a way that their unit normals agree and $\S^n(1/H_0)$ lies above $\sig_+$, the mean curvature comparison principle leads a contradiction. 

\textit{2.} The same argument for the orbit $\gamma_-(s)$ carries over verbatim, that is, $\gamma_-$ cannot stay forever in $\Lambda_2$ and it converges to a point $(x_-,0)$ with $x_-\geq\frac{n-1}{\lambda n}$, being either $e_0$ with $s\rightarrow-\infty$, or a finite point reaching it at some finite instant $s_-<0$. Now, it remains to prove that $(x_-,0)$ cannot be the equilibrium point $e_0=\big(\frac{n-1}{\lambda n},0\big)$. To this end, note that $\gamma_-$ cannot intersect the curve $\Gamma_1$ because of the monotonicity properties of $\Lambda_2$, and the horizontal graph $\Gamma_1(y)$ given by \eqref{curvagamma} achieves a global maximum at $y_0=-1/\lambda$, and so $\Gamma_1(y_0)>\frac{n-1}{\lambda n}=\Gamma_1(0)$.  Thus, when $\gamma_-$ leaves the maximum of $\Gamma_1$ at his left-hand side, $\gamma_-$ cannot go backwards and converge to $e_0$, since it would contradict the monotonicity of $\Lambda_2$. See Figure \ref{contradiccionybien} left, the pointed plot of the orbit $\gamma_-$. 

\textit{3.} First we prove that $x_+\neq x_-$. Arguing by contradiction, suppose that $x_+=x_-:=\hat{x}$. Note that $(\hat{x},0)\neq e_0$ since we discussed in item \textit{2} that $(x_-,0)\neq e_0$. In this situation the orbits $\gamma_+$ and $\gamma_-$ meet each other orthogonally at $(\hat{x},0)$ (see Figure \ref{contradiccionybien} left, the continuous plot of $\gamma_+$ and $\gamma_-$). By uniqueness of the Cauchy problem they can be smoothly glued together to form a larger orbit $\gamma_0$ satisfying the following: $\gamma_0$ is a compact arc joining the points $(0,1)$ and $(0,-1)$, strictly contained in $\Lambda_1\cup\Lambda_2\cup\{(\hat{x},0)\}$. Hence, the rotational $\lH$ generated by this orbit would be a simply connected, closed hypersurface, i.e. a rotational sphere, but this fact contradicts Lemma \ref{noclosed}. 

To finish, we check that $x_+<x_-$ by another contradiction argument. Indeed, suppose that $x_+>x_-$ and let us focus on the orbit $\gamma_-$. We will keep track of $\gamma_-(s)$ by moving within it with the parameter $s$ decreasing; recall that $\gamma_-(s)$ tends to $(0,-1)$ as the parameter $s$ increases. In this setting, the orbit $\gamma_-$ would be at the left-hand side of the orbit $\gamma_+$ when they intersect the axis $y=0$. As $\gamma_+$ and $\gamma_-$ cannot intersect each other and by properness of the orbits in $\Theta_1$, the only possibility is that $\gamma_-$ enters the region $\Lambda_2$ and later $\Lambda_4$ at some finite instant. By properness, monotonicity and since $\gamma_-$ cannot converge to the segment $\{(0,y),\ |y|<1\}$, as it was mentioned in Section $3$, $\gamma_-$ cannot do anything but enter the region $\Lambda_3$. As $\gamma_-$ cannot self-intersect, it follows that $\gamma_-$ ends up converging asymptotically to $e_0$ (Figure \ref{contradiccionybien} left, the dashed plot of the orbit $\gamma_-$). But this is a contradiction with the fact that $e_0$ is asymptotically stable and with motion of the orbit $\gamma_-$, since it tends to \emph{escape} from $e_0$ as $s$ increases. So, the only possibility is that $\gamma_+$ is at the left-hand side of $\gamma_-$ when they converge to the axis $y=0$, either converging to $e_0$ (Figure \ref{contradiccionybien} right, dashed plot) or intersecting the axis $y=0$ at a finite point $(x_+,0)$ (Figure \ref{contradiccionybien} right, continuous plot).
\end{proof}

\begin{figure}[H]
	\centering
	\includegraphics[width=.9\textwidth]{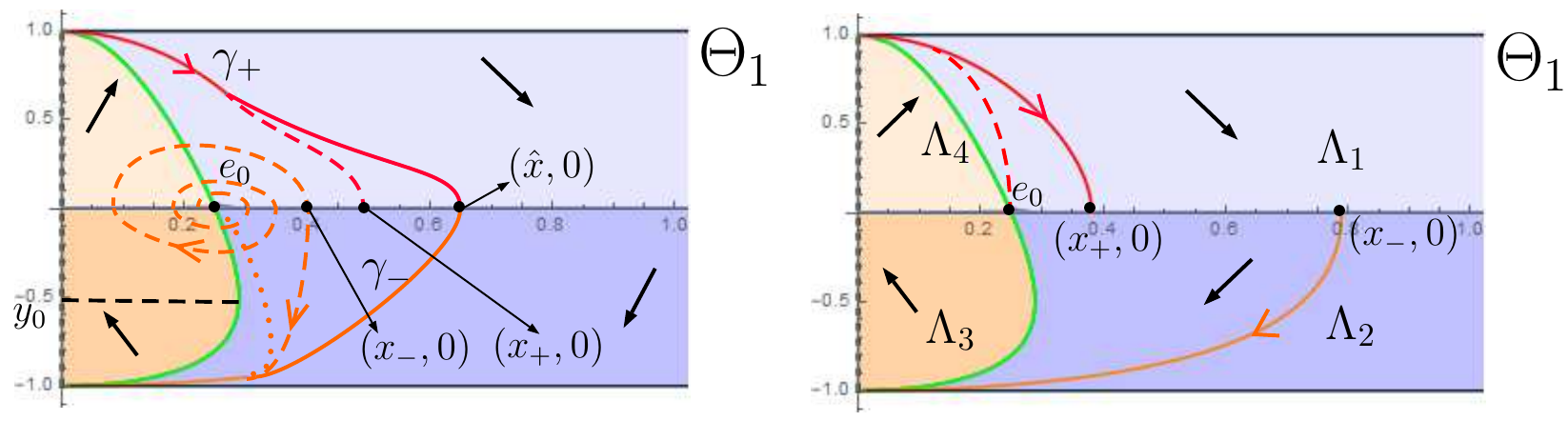}
	\caption{Left: the configurations that cannot happen in $\Theta_1$ for $\gamma_+$ and $\gamma_-$. Right: the configuration of the orbits $\gamma_+$ and $\gamma_-$ in $\Theta_1$ when reaching the axis $y=0$.}
	\label{contradiccionybien}
\end{figure}

As seen on the right-hand side of Figure \ref{contradiccionybien}, we get a first approximation about how to represent properly the orbits $\gamma_+$ and $\gamma_-$ when they intersect the axis $y=0$. However, we must carry on analyzing the global behavior of $\gamma_+$ and $\gamma_-$ and its corresponding generated curves $\alpha_+$ and $\alpha_-$.

On the one hand, if $\gamma_+$ intersects the axis $y=0$ at a finite point $(x_+,0)$ different to the equilibrium $e_0$, then $\gamma_+$ enters the region $\Lambda_2$ but cannot intersect $\gamma_-$, and so $\gamma_+$ has to enter the region $\Lambda_3$. By monotonicity, properness and since $\gamma_+$ cannot converge to the segment $\{(0,y),\ |y|<1\}\subset\Theta_1$, the only possibility is that $\gamma_+$ has to enter the region $\Lambda_4$. As $\gamma_+$ cannot self-intersect, we see that $\gamma_+$ ends up converging asymptotically to $e_0$ (see Figure \ref{lmayor1faseseje}, left). In any case, this orbit generates a complete, arc-length parametrized curve $\alpha_+(s)=(x_+(s),z_+(s))$ with the following properties: the $x_+(s)$-coordinate is bounded and converges to the value $\frac{n-1}{\lambda n}$, that is, $\alpha_+(s)$ converges to the straight line $x=\frac{n-1}{\lambda n}$ for $s\rightarrow\infty$; and the $z_+(s)$-coordinate is strictly increasing since $\gamma_+\subset\Theta_1$ and so $z_+'(s)>0$, which implies that $\alpha_+(s)$ has no self-intersections, i.e. is an embedded curve. 

Hence, the hypersurface $\sig_+$ generated after rotating $\alpha_+$ around the $x\n1$-axis, is a properly embedded, simply connected $\lH$ that converges to the CMC cylinder $C(\frac{n-1}{\lambda n})$ of radius $\frac{n-1}{\lambda n}$. To be more specific:
\begin{itemize}
\item if $\lambda>\sqrt{n-1}/2$, then $\gamma_+$ converges to $e_0$ spiraling around it infinitely many times. This implies that $\alpha_+$ intersects the line $x=\frac{n-1}{\lambda n}$ infinitely many times, and so does $\sig_+$ with $C(\frac{n-1}{\lambda n})$. See Figure \ref{lmayor1faseseje} left and right, the continuous plot. 
\item if $\lambda<\sqrt{n-1}/2$, then $\gamma_+$ converges to $e_0$ \emph{directly}, that is without spiraling around it. As a consequence, $\alpha_+'$ is never vertical and thus $\sig_+$ is a strictly convex graph that converges to $C\left(\frac{n-1}{n}\right)$. See Figure \ref{lmayor1faseseje} left and right, the dashed plot.
\item if $\lambda=\sqrt{n-1}/2$, then $\gamma_+$ converges to $e_0$ after spiraling around it a finite number of times, and so $\sig_+$ is a graph outside a compact set.
\end{itemize}


\begin{figure}[H]
	\centering
	\includegraphics[width=.9\textwidth]{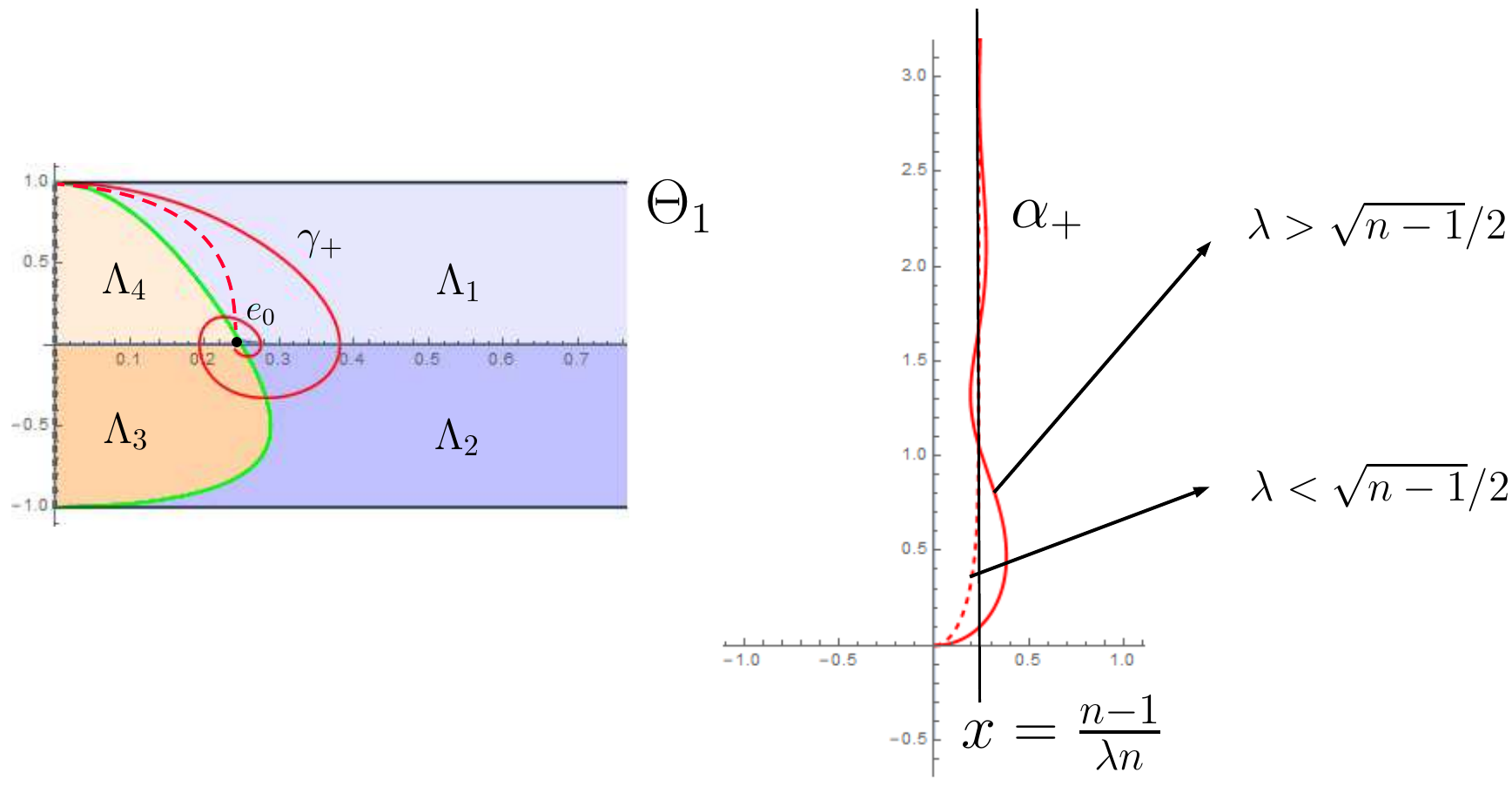}
    \caption{Left: the phase plane $\Theta_1$ and the possible orbits $\gamma_+$. Right: the corresponding arc-length parametrized curves $\alpha_+$.}
	\label{lmayor1faseseje}
\end{figure}

On the other hand, recall that $\gamma_-$ intersects the axis $y=0$ at some finite point $\gamma_-(s_-)=(x_-,0),\ s_-<0$, lying on the right-hand side of $e_0$. Decreasing $s<s_-$ we get that $\gamma_-$ enters the region $\Lambda_1$. By monotonicity, properness and since $\gamma_+$ and $\gamma_-$ cannot intersect in $\Theta_1$, the only possibility for $\gamma_-$ is to have as endpoint some $\gamma_-(s_1)=(x_1,1)$ with $x_1>0$ and $s_1<s_-$ (see Figure \ref{lmayor1faseseje2}, top left). At this instant we have $x_-(s_1)=x_1$ and $x'_-(s_1)=1$, and ODE \eqref{odemedia} ensures us that $z''_-(s_1)>0$, that is the height of $\alpha_-$ reaches a minimum. As a consequence, for $s<s_1$ close enough to $s_1$ the height function $z_-(s)$ is decreasing, i.e. $z'_-(s)<0$ and thus $\alpha_-(s)$ generates an orbit which is contained in $\Theta_{-1}$; now, $\varepsilon=-1$ which agrees with the sign of $z'_-(s)$. For the sake of clarity, we will keep naming $\gamma_-$ to this orbit in $\Theta_{-1}$.

In this situation, $\gamma_-\subset\Theta_{-1}$ is an orbit with $\gamma_-(s_1)=(x_1,1)$ as endpoint and lying in the region $\Lambda_+$. Again, by monotonictiy and properness the orbit $\gamma_-$ has to intersect the axis $y=0$ in an orthogonal way, and then enter the region $\Lambda_-$. Lastly, Proposition \ref{contradicecomparacioncurvaturamedia} ensures us that $\gamma_-$ cannot stay contained in $\Lambda_-$ with the $x_-(s)$-coordinate tending to infinity, hence $\gamma_-$ intersects the line $y=-1$ at some $\gamma(s_2)=(x_2,-1),\ s_2<s_1$ (see Figure \ref{lmayor1faseseje2}, bottom left).

Again, in virtue of Equation \eqref{odemedia}, at the instant $s=s_2$ the height function $z_-(s)$ of $\alpha_-$ satisfies $z''_-(s_2)<0$, and so $z_-(s)$ achieves a maximum at $s=s_2$ and thus $z_-(s)$ for $s<s_2$ close enough to $s_2$ is an increasing function, and so $\alpha_-(s)$ for $s<s_2$ close enough to $s_2$ generates an orbit in $\Theta_1$, which will be still named $\gamma_-$. Now, $\gamma_-$ starts at the point $(x_2,-1)$ and by monotonicty and properness it has to go from $\Lambda_2$ to $\Lambda_1$ as $s<s_2$ decreases. Since $\gamma_-$ cannot self-intersect, we get that $\gamma_-$ has to reach again the line $y=1$ at some point $(x_3,1)$, with $x_3>x_1$ (see again Figure \ref{lmayor1faseseje2}, top left).

This process is repeated and we get a complete, arc-length parametrized curve $\alpha_-(s)$ with self-intersections and whose height function increases and decreases until reaching the $x\n1$-axis orthogonally (see Figure \ref{lmayor1faseseje2}, right). Therefore, the $\lH$ obtained by rotating $\alpha_-$ is properly immersed (with self-intersections) and simply connected.


\begin{figure}[H]
	\centering
	\includegraphics[width=.7	\textwidth]{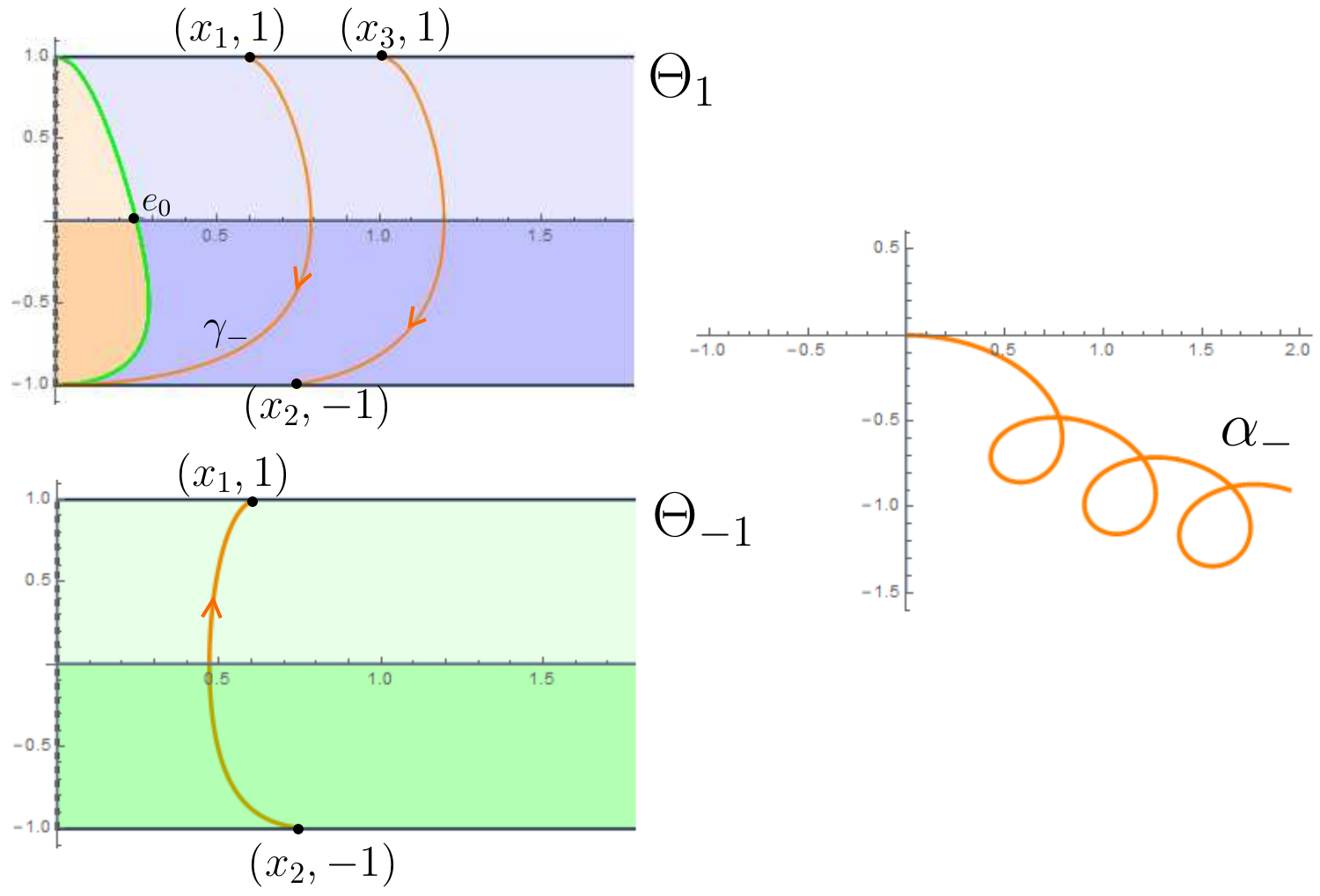}
    \caption{Left: the phase planes $\Theta_1$ and $\Theta_{-1}$ and the orbit $\gamma_-$. Right: the corresponding arc-length parametrized curve $\alpha_-$.}
	\label{lmayor1faseseje2}
\end{figure}
\vspace{-0.2cm}

Our second goal concerns the classification of complete $\lHn$ non-intersecting the axis of rotation. For that, let us take $r_0>0$ and $\gamma(s)$ the orbit in $\Theta_1$ passing through the point $(r_0,0)$ at the instant $s=0$. Then, $\gamma$ is an arc having one endpoint of the form $(r_1,1),\ r_1>0$\footnote{We can suppose that $r_1>0$, since if $r_1=0$ then $\gamma$ is the orbit corresponding to the $\lH$ intersecting the axis of rotation, already described in Figure \ref{lmayor1faseseje}.}, and either converges to $e_0$ as $s\rightarrow\infty$ or has another endpoint of the form $(r_2,-1)$. In the second case, the orbit $\gamma$ continues in $\Theta_{-1}$ as a compact arc and then goes in again in $\Theta_1$. By propernes, after a finite number of iterations, the orbit $\gamma$ eventually converges to $e_0$ (see Figure \ref{lmayor1fueraeje}, left).

This configuration ensures us that the $\lHn$ associated to $\gamma$ is properly immersed and diffeomorphic to $\S^{n-1}\times\R$, with one end converging to $C\left(\frac{n-1}{\lambda n}\right)$ and the other end having unbounded distance to the axis of rotation, looping and self-intersecting infinitely many times (see Figure \ref{lmayor1fueraeje}, right).

\begin{figure}[H]
	\centering
	\includegraphics[width=.7\textwidth]{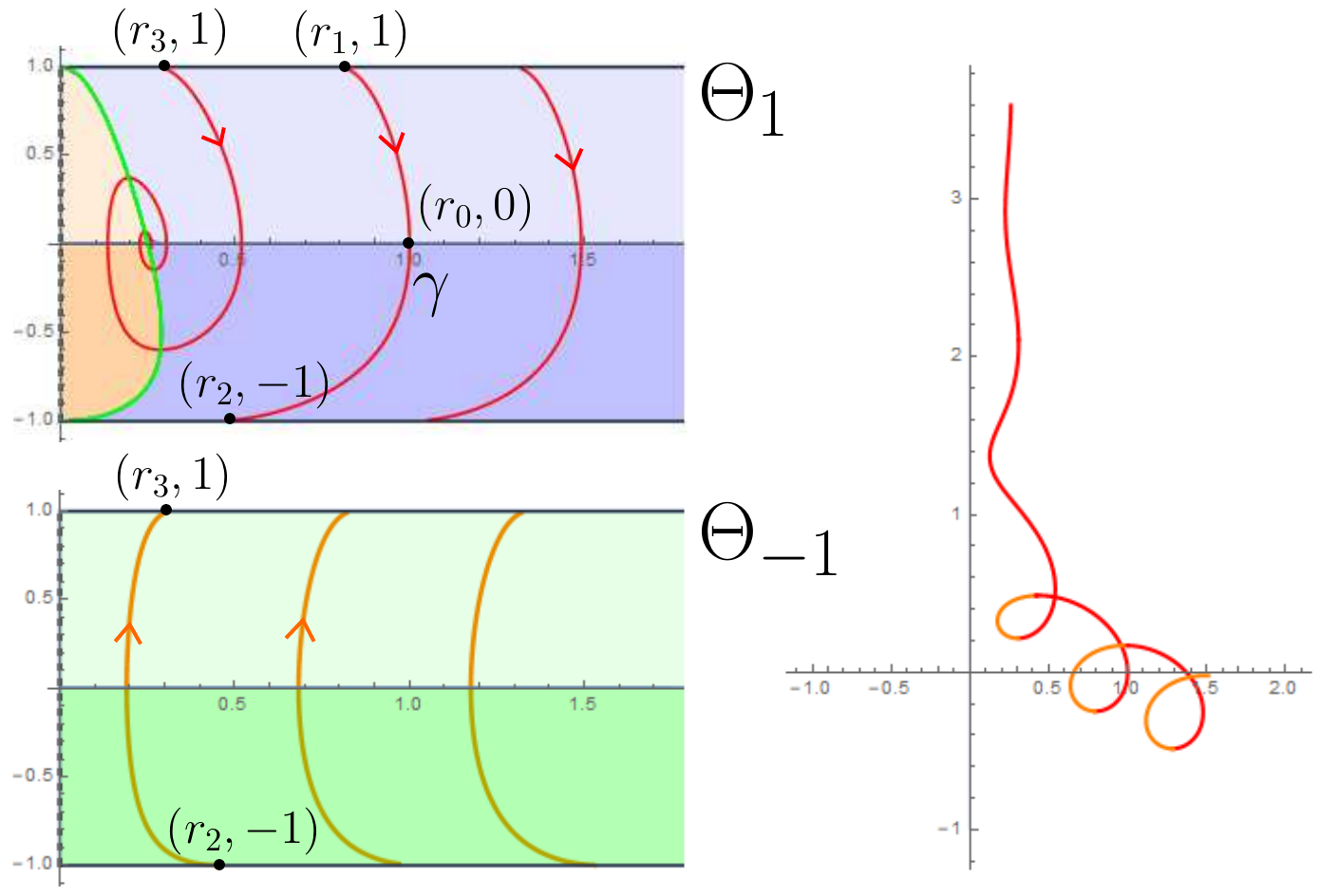}
	 \caption{Left: the phase planes $\Theta_1$ and $\Theta_{-1}$ and the orbit $\gamma$. Right: the corresponding arc-length parametrized curve $\alpha$.}
	\label{lmayor1fueraeje}
\end{figure}

\begin{center}
{\Large \textbf{\underline{Case $\lambda=1$}}}
\end{center}
\vspace{-.25cm}

Now we suppose that $\lambda=1$. In this situation, the curve $\Gamma_1$ given by Equation \eqref{curvagamma} for $\varepsilon=1$ is a connected arc in $\Theta_1$ having the point $(0,1)$ as endpoint, and the line $y=-1$ as an asymptote. Thus, $\Theta_1$ has four monotonicity regions, $\Lambda_1,...,\Lambda_4$ (see Figure \ref{fasesligual1}, left). The region $\Lambda_1\cup\Lambda_2$ corresponds to points with positive geodesic curvature, while the region $\Lambda_3\cup\Lambda_4$ corresponds to points with negative geodesic curvature. For $\varepsilon=-1$, the curve $\Gamma_{-1}$ in $\Theta_{-1}$ is empty, and there are only two monotonicity regions $\Lambda_+$ and $\Lambda_-$ (see Figure \ref{fasesligual1}, right).

\begin{figure}[H]
\centering
\includegraphics[width=.9\textwidth]{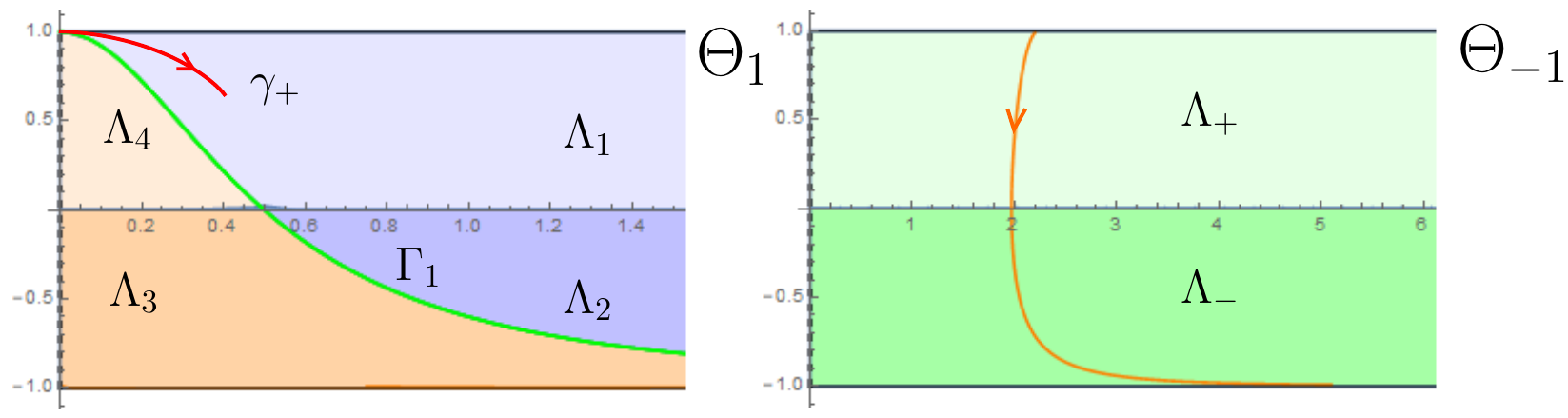}
\caption{The phase planes $\Theta_\varepsilon,\ \varepsilon=\pm1$ for $\lambda=1$, their monotonicity regions and two orbits following the motion at each monotonicity region.}
\label{fasesligual1}
\end{figure}

We first study the rotational $\H_1$-hypersurfaces intersecting the axis of rotation. For this purpose, we must begin by pointing out that a horizontal hyperplane $\Pi$=$\{x\n1=c_0,\ c_0\in\R\}\subset\R^{n+1}$ oriented with unit normal $\eta=-e\n1$ is precisely an example of such an $\H_1$-hypersurface. Indeed, the mean curvature of $\Pi$ is identically zero, and Equation \eqref{defilambdasup} for the density vector $v=e\n1$ is
$$
H_\Pi=\langle\eta,e\n1\rangle+\lambda=\langle-e\n1,e\n1\rangle+1=0.
$$
This fact, along with the uniqueness of the Cauchy problem associated to \eqref{1ordersys} implies that any orbit $\gamma\in\Theta_\varepsilon$ cannot have a limit point in the line $y=-1$, since these points correspond to orbits that generate horizontal hyperplanes with downwards orientation.

Now, with the aim of looking for the remaining $\H_1$-hypersurfaces intersecting the axis of rotation, we follow the same procedure than the one used for the case $\lambda>1$. Note that by Lemma \ref{existorbitaext} there exists a unique orbit $\gamma_+(s)$ in $\Theta_1$ with $\gamma_+(0)=(0,1)$ generating an arc-length parametrized curve $\alpha_+$ intersecting the axis of rotation at the instant $s=0$ and with $\kappa_{\alpha_+}(s)>0$ for $s>0$ small enough. Again, item \textit{1.} in Proposition \ref{contradicecomparacioncurvaturamedia} ensures us that: either $\gamma_+$ converge directly to $e_0$ with $s\rightarrow\infty$; or $\gamma_+$ intersects the axis $y=0$ at a point $(x_+,0)$ with $x_+>\frac{n-1}{n}$ at some finite instant. In this latter case, $\gamma_+$ enters the region $\Lambda_2$ and by monotonicity and properness, $\gamma_+$ intersects the curve $\Gamma_1$ and then enters the region $\Lambda_3$. Since $\gamma_+$ cannot converge to a point $(0,y),\ |y|<1$, $\gamma_+$ has to enter the region $\Lambda_4$, and lastly $\gamma_+$ intersects the curve $\Gamma_1$ entering again the region $\Lambda_1$. Finally, since $\gamma_+$ cannot self-intersect, we see that $\gamma_+$ has to converge asymptotically to $e_0$. Specifically:

\begin{itemize}
\item if $n=2,3,4$, then $1>\sqrt{n-1}/2$ and $\gamma_+$ spirals around $e_0$ an infinite number of times. 
\item if $n=5$, then $1=\sqrt{n-1}/2$ and $\gamma_+$ converges to $e_0$ after spiraling around it a finite number of times. 
\item if $n\geq 6$, then $1<\sqrt{n-1}/2$ and $\gamma_+$ converges directly to $e_0$, without looping around it.
\end{itemize}
Hence, in any case, the $\H_1$-hypersurface $\sig_+$ obtained by rotating $\alpha_+$ around the $x\n1$-axis is a complete, properly embedded and simply connected hypersurface that converges to the CMC cylinder $C(\frac{n-1}{n})$ (see the right-hand side of Figure \ref{lmayor1faseseje} since it is a similar case). 

Secondly, we describe rotational $\H_1$-hypersurfaces non-intersecting the axis of rotation. To do so, we first analyze the behavior of the orbits in $\Theta_1$. Let us fix $\hat{x}>0$, and consider the orbit $\gamma(s)$ in $\Theta_1$ such that $\gamma(0)=(\hat{x},0)$. Moreover, we can suppose that $\gamma\neq\gamma_+$. For $s>0$, the monotonicity properties of $\Theta_1$ ensure us that $\gamma(s)$ converge asymptotically to $e_0$, but $\gamma$ and $\gamma_+$ cannot intersect each other, and so $\gamma(s)$ unwraps from $e_0$ a finite number of times for $s<0$. Consequently, $\gamma(s)$ intersects the axis $y=0$ a finite number of times for $s<0$, and so we can denote $(x_0,0)$ to the last intersection of $\gamma$ with $y=0$. 

Now, we claim that $(x_0,0)$ is on the right-hand side of $e_0$. Arguing by contradiction, suppose that $(x_0,0)$ is on the left-hand side of $e_0$ (see Figure \ref{ligual1fases}, top left, blue orbit,  to clarify this proof). Then, the orbit $\gamma(s)$ cannot intersect the curve $\Gamma_1$; otherwise, $\gamma$ would intersect $y=0$ again by monotonicity of $\Lambda_2$. So, by properness and since $\gamma$ cannot have an endpoint at $y=-1$, the only possibility for $\gamma(s)$ is to converge to the line $y=-1$. As a consequence, $\gamma$ can be locally expressed as a graph $(x,h(x))$ with $h(x_0)=0,\ h'(x)<0,\ \forall x>x_0$ and $h(x)\rightarrow -1$ when $x\rightarrow\infty$. 

To get the contradiction, we compare the orbits of the associated systems \eqref{1ordersys} of rotational hypersurfaces of two different prescribed mean curvature. Firstly, we remind that $\H_\lambda$-hypersurfaces arises as a particular case when in Equation \eqref{prescribedMC} we prescribe the function $\H_\lambda(z)=\langle z,e\n1\rangle+\lambda,\ \forall z\in\S^n$. Now, consider the function $\f(z)=1/2\cos(\pi/2\langle z,e\n1\rangle),\ \forall z\in\sn$, which is a non-negative, even function in $\S^n$ and such that $\f(\pm e\n1)=0$, and as detailed in \cite{BGM2}, we can also study the rotational $\f$-hypersurfaces by just substituting the prescribed function $\f(y)=1/2\cos(\pi/2y)$ in system \eqref{1ordersys} instead of $y+\lambda$. The study made in Sections 2 and 4 in \cite{BGM2} ensures us that the orbits for the prescribed function $\f$ are closed curves, symmetric with respect to the axis $y=0$ and that never intersect the lines $y=\pm 1$. For this prescribed function we view its orbits $\sigma_\f(t)=(x_\f(t),y_\f(t))$ in the phase plane $\Theta_1$ of system \eqref{1ordersys}. Suppose that there are instants $s_0,t_0$ such that $\sigma_\f(t_0)=\gamma(s_0)$. Then, since $\f(y)\leq 1+y$, with equality if and only if $y=-1$, a standard comparison of ODE's yields that $y'(s_0)<y'_\f(t_0)$. At this point, we take $0<x_0^*<x_0$ and $\sigma_\f$ such that $\sigma_\f(0)=(x_0^*,0)$. This orbit $\sigma_\f$ can be also expressed as a graph $(x,f(x))$ such that $f(x_0^*)=0$, $f(x)$ decreases until reaching a minimum and then $f$ increases intersecting again the axis $y=0$. By continuity, there exists some $x_*>x_0$ such that $f(x_*)=h(x_*)$. Therefore, there exist $s_*,t_*<0$ such that $\gamma(s_*)=\sigma_\f(t_*)$, where their second coordinates would satisfy $y'(s_*)>y'_\f(t_*)$ (see Figure \ref{ligual1fases}, top left), arriving to the expected contradiction. 

Since $(x_0,0)$ is on the right-hand side of $e_0$, $\gamma(s)$ has to intersect $\Gamma_1$ at some instant $s_0<0$ and enter the region $\Lambda_2$. Now, monotonicity and properness allows us to ensure that $\gamma(s)$ reaches the line $y=1$ at some finite point $\gamma(s_1)=(x_1,1),\ s_1<0$, with $x_1>0$ (see Figure \ref{ligual1fases}, top right). Consequently, the arc-length parametrized curve $\alpha(s)=(x(s),z(s))$ associated to this orbit $\gamma$ satisfies $x(s_1)=x_1,\ x'(s_1)=1$ and for $s>s_1$ the $x(s)$-coordinate ends up converging to the value $\frac{n-1}{n}$, that is $\alpha(s)$ converges to the line $x=\frac{n-1}{n}$ as $s\rightarrow\infty$. The $z(s)$-coordinate is strictly increasing, since $\mathrm{sign}(z'(s))=\varepsilon=1$.

To finish, note that the behavior of the orbit $\gamma$ in $\Theta_{-1}$ follows easily from the monotonicity properties. This orbit $\gamma$ has to intersect orthogonally the axis $y=0$ and then converge to the line $y=-1$ (see Figure \ref{ligual1fases}, bottom left). Note that $\gamma$ cannot converge to some line $\{y=y_0,\ y_0\in (-1,0)\}$ by using the same reasoning that the one contained in the proof of item \textit{1} in Proposition \ref{contradicecomparacioncurvaturamedia}. In this situation, the $x(s)$-coordinate of $\alpha$ is unbounded as $s\rightarrow -\infty$ and $z(s)$ is a strictly decreasing function, reaching its minimum at the instant $s_1$.

The $\H_1$-hypersurface generated by rotating $\alpha$ around the $x\n1$-axis is complete, properly immersed and diffeomorphic to $\S^{n-1}\times\R$, with one end converging to the CMC cylinder $C\left(\frac{n-1}{n}\right)$ and the other end being a graph outside a ball in $\R^n$. Note that every such $\H_1$-hypersurface has a self-intersection, hence it is not embedded (see Figure \ref{ligual1fases}, bottom right).

\begin{figure}[H]
	\centering
	\includegraphics[width=.8\textwidth]{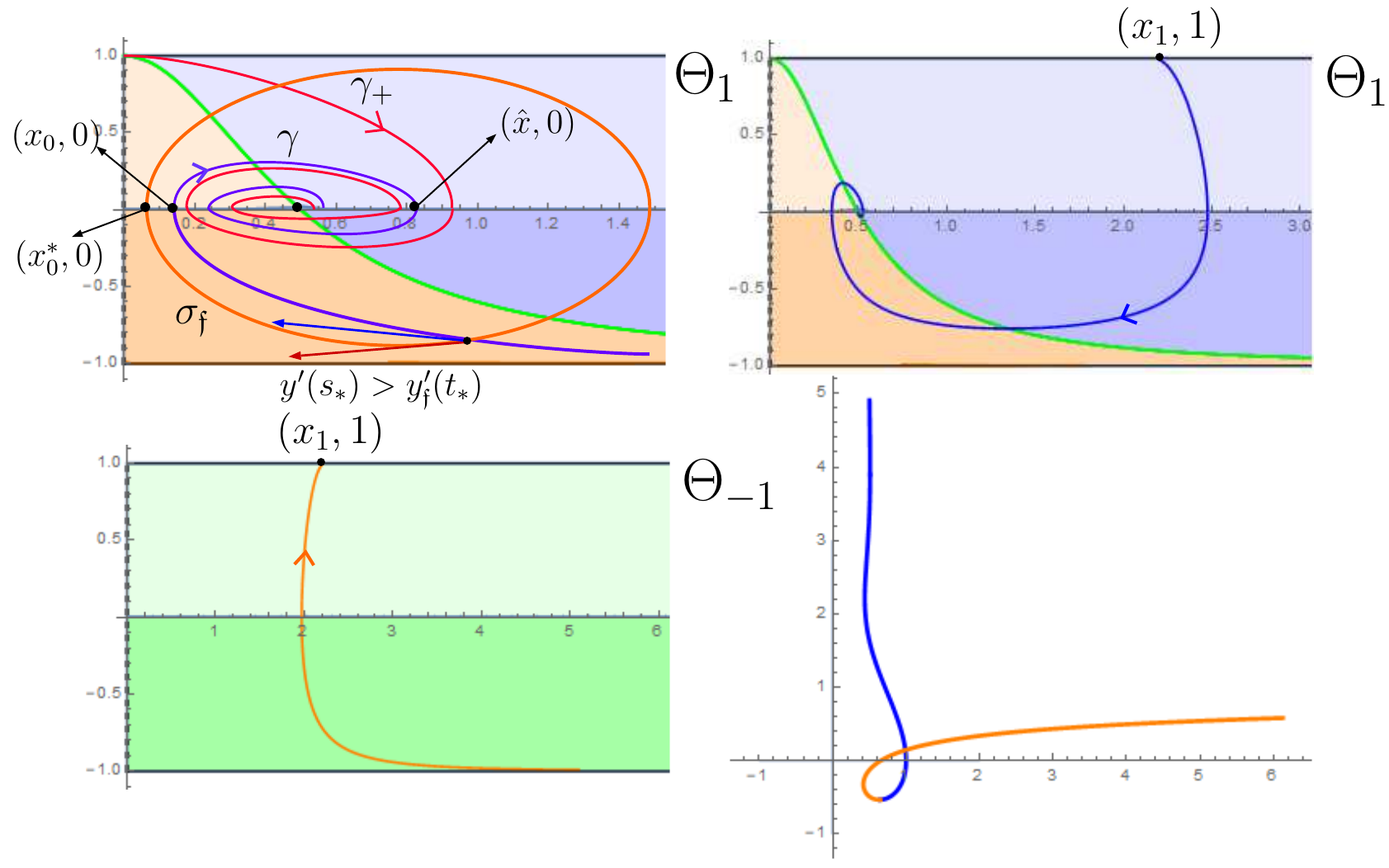}
	\caption{Top left: the configuration that cannot happen in $\Theta_1$ for $\gamma$. Top right and bottom left: the phase planes $\Theta_1$ and $\Theta_{-1}$ and the orbit $\gamma$. Bottom right: the corresponding arc-length parametrized curve $\alpha$.}
	\label{ligual1fases}
\end{figure}

\begin{center}
{\Large \textbf{\underline{Case $\lambda<1$}}}
\end{center}
\vspace{-.25cm}

Finally, we consider the case when $0<\lambda<1$. In this situation, for $\varepsilon=1$, the curve $\Gamma_1$ given by Equation \eqref{curvagamma} is a connected arc in $\Theta_1$ having the point $(0,1)$ as endpoint, and an asymptote at the line $y=-\lambda$. Consequently, in $\Theta_1$ there are four monotonicity regions called $\Lambda_1^+,\dots,\Lambda_4^+$ (see Figure \ref{lmenor1todo}, top left). For $\varepsilon=-1$, the curve $\Gamma_{-1}$ in $\Theta_{-1}$ is also a connected arc with $(0,-1)$ as endpoint and an asymptote also at the line $y=-\lambda$, then there are three regions of monotony denoted by $\Lambda_1^-,\Lambda_2^- \; \text{and} \; \Lambda_3^-$ (see Figure \ref{lmenor1todo}, bottom left).

Once again, we begin describing the $\lHn$ intersecting orthogonally the axis of rotation. On the one hand, by Lemma \ref{existorbitaext} we know that there exists a unique orbit $\gamma_+(s)$ in $\Theta_1$ with $(0,1)$ as endpoint. By reasoning as done in the previous cases, we can conclude that $\gamma_+$ has to converge asymptotically to $e_0$ (see Figure \ref{lmenor1todo}, top left). Therefore, the $\lH$ $\sig_+$ obtained by rotating $\alpha_+$ around the $x\n1$-axis is a properly embedded, simply connected hypersurface converging asymptotically to the CMC cylinder $C\big(\frac{n-1}{\lambda n}\big)$ (see Figure \ref{lmenor1todo}, right). Additionally, the obtained discussion for $\Sigma_+$ depending on the value of $\lambda$ with respect to $\sqrt{n-1}/2$ is exactly the same than the one that we get in the case $\lambda>1$. On the other hand, Lemma \ref{existorbitaext} allows us to assert that there exists a unique orbit $\gamma_-(s)$ in $\Theta_{-1}$ satisfying $\gamma_-(0)=(0,-1)$. Then $\gamma_-$ belongs to $\Lambda_2^-$ for $s<0$ close enough to $s=0$ (see Figure \ref{lmenor1todo} bottom left). By monotonicity, $\gamma_-$ cannot intersect the curve $\Gamma_{-1}$, and by properness and by Proposition \ref{contradicecomparacioncurvaturamedia}, $\gamma_-(s)$ has to converge to the line $y=-\lambda$ when $s\rightarrow-\infty$. This implies that the $\lH$ $\sig_-$ obtained by rotating $\alpha_-$ around the $x\n1$-axis is an entire, strictly convex graph (see Figure \ref{lmenor1todo}, right).

\begin{figure}[H]
	\centering
	\includegraphics[width=.75\textwidth]{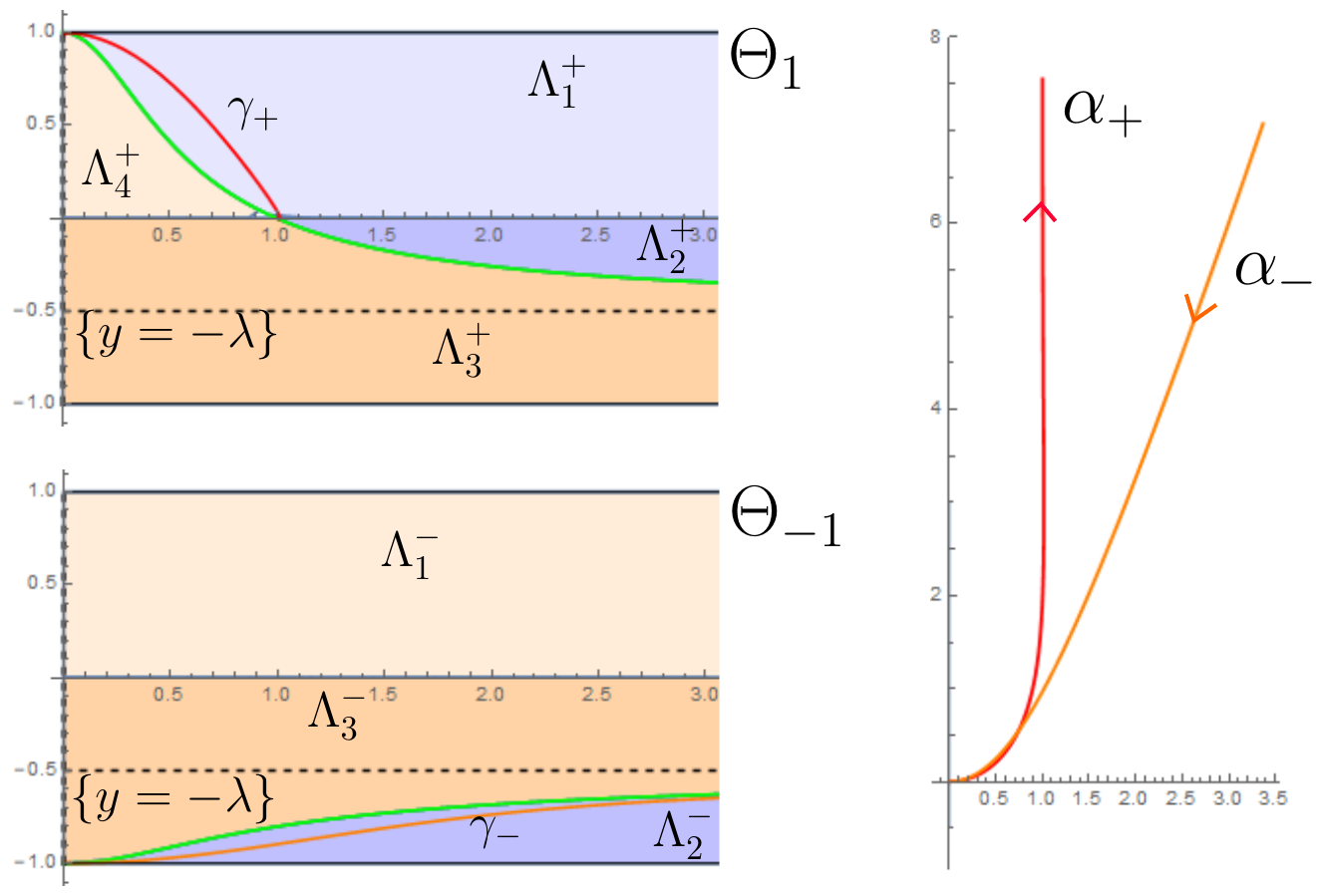}
	\caption{Left: the phase planes $\Theta_1$ and $\Theta_{-1}$ and the orbits $\gamma_+$ and $\gamma_-$. Right: the corresponding arc-length parametrized curves $\alpha_+$ and $\alpha_-$.}
	\label{lmenor1todo}
\end{figure}

Finally, we analyze the $\lHn$ non-intersecting the axis of rotation. For that, let $\gamma$ be an orbit in $\Theta_1$ passing through a point $(\hat{x},0),\ \hat{x}>0$. By monotonicity and properness, $\gamma(s)$ has to converge asymptotically to $e_0$ as $s\rightarrow\infty$, either directly, spiraling around if a finite number of times or infinitely many times. If we decrease the parameter $s$, and noting that $\gamma$ cannot intersect $\gamma_+$, we see that $\gamma$ has to intersect the axis $y=0$ in a last point $(x_0,0)$. Note that without loss of generality we can assume that $\gamma$ reaches the point $(x_0,0)$ at the instant $s=0$, and to conclude the discussión we distinguish two cases: if $(x_0,0)$ lies at the right-hand side or the left-hand side of $e_0=\big(\frac{n-1}{\lambda n},0\big)$.

First, suppose that $x_0<\frac{n-1}{\lambda n}$. Decreasing $s<0$ we see that $\gamma(s)$ cannot intersect $\Gamma_1$, since otherwise it would intersect $y=0$ again, and therefore $\gamma$ stays in $\Lambda_3^+$ until reaching some $(x_1,-1)$ as endpoint (see Figure \ref{lmenor1todofueraeje}, top left, red orbit). Now, the orbit $\gamma$ continues in $\Theta_{-1}$ entering the region $\Lambda_2^-$ and converging to the line $y=-\lambda$ (see Figure \ref{lmenor1todofueraeje}, bottom left, red orbit). If we denote by $\alpha(s)$ to the arc-length parametrized curve generated by $\gamma$ we get that the rotation of $\alpha$ around the $x\n1$-axis gives us a properly embedded $\lH$, diffeomorphic to $\S^{n-1}\times\R$ with two ends; one converging to $C\big(\frac{n-1}{\lambda n}\big)$ and the other being a strictly convex graph (see Figure \ref{lmenor1todofueraeje}, center).

Now, suppose that $x_0>\frac{n-1}{\lambda n}$. Decreasing $s<0$, and because $\gamma$ and $\gamma_+$ cannot intersect each other, we see that $\gamma(s)$ stays in $\Lambda_1^+$ until reaching some $(x_2,1)$ as endpoint (see Figure \ref{lmenor1todofueraeje}, top left, orange orbit). Now, $\gamma$ continues in $\Theta_{-1}$ entering the region $\Lambda_1^-$ and then going into $\Lambda_3^-$ after intersecting orthogonally the axis $y=0$. As $\gamma$ cannot stay contained in $\Lambda_3^-$ in virtue of Proposition \ref{contradicecomparacioncurvaturamedia}, we get that $\gamma(s)$ has to enter $\Lambda_2^-$ and converge to the line $y=-\lambda$ when $s\rightarrow-\infty$ (see Figure \ref{lmenor1todofueraeje}, bottom left, orange orbit). Hence, the rotational $\lH$ obtained is properly immersed, diffeomorphic to $\S^{n-1}\times\R$ and with two embedded ends; one converging to $C\big(\frac{n-1}{\lambda n}\big)$ and the other being a strictly convex graph (see Figure \ref{lmenor1todofueraeje}, right).

\begin{figure}[H]
	\centering
	\includegraphics[width=.9\textwidth]{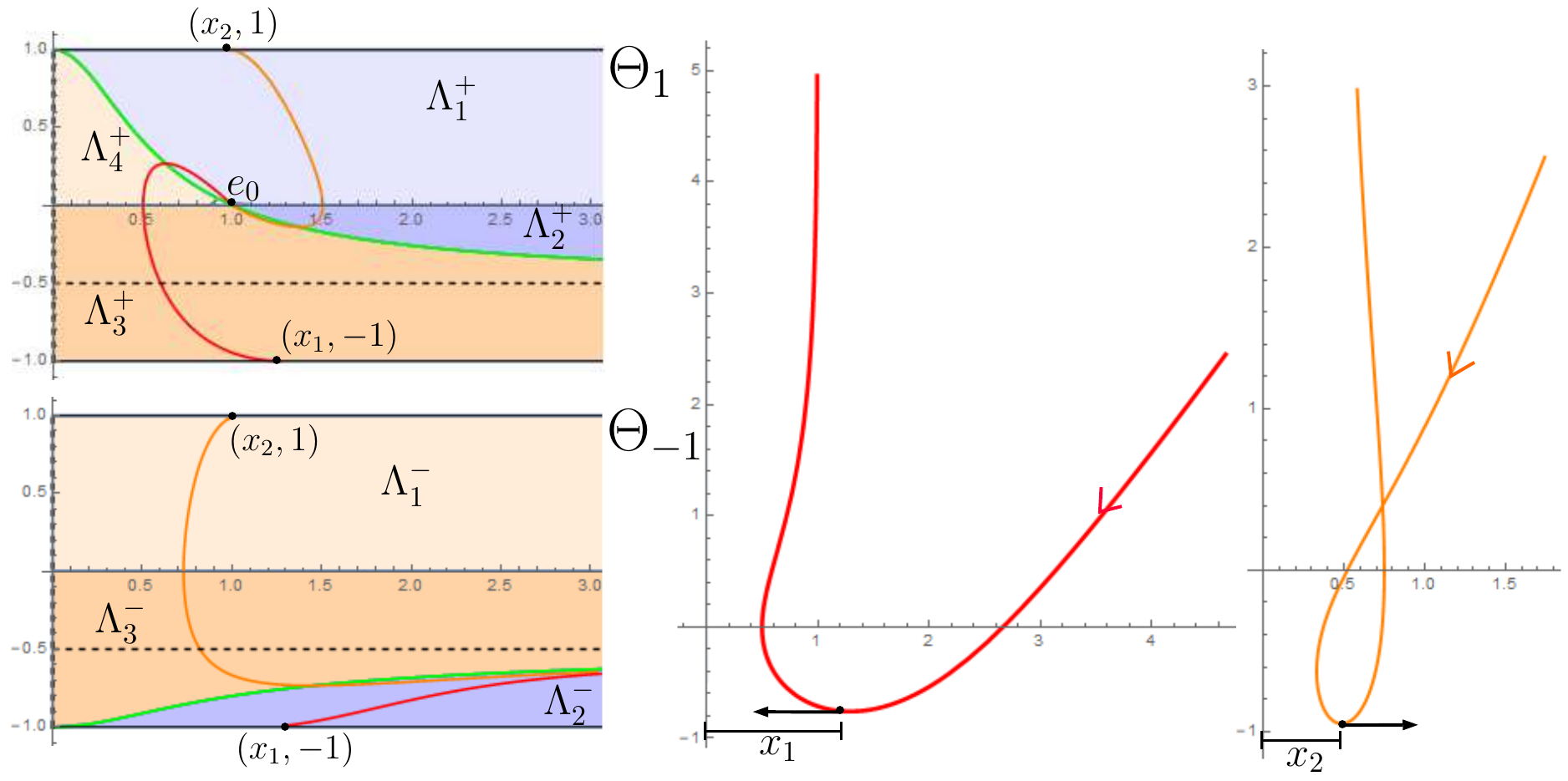}
	\caption{Left: The phase planes $\Theta_1$ and $\Theta_{-1}$ and the two possible configurations for the orbit $\gamma$. Center and right: the two corresponding arc-length parametrized curves $\alpha$.}
	\label{lmenor1todofueraeje}
\end{figure}

To finish, we summarize the discussion carried on along this section in two classification results of the rotational $\lHn$: the first result for the ones intersecting the axis $x_{n+1}$, and the second one for the opposite case. For the very particular case that $n=2$, these results agree with the ones obtained in \cite{Lop}.

\begin{teo}

Let be $\Sigma_+$ and $\Sigma_-$ the complete, rotational $\lHn$ intersecting the axis $x_{n+1}$ with upwards and downwards orientation respectively. Then: 
\begin{itemize}
	\item[1.] For any $\lambda>0$, $\Sigma_+$ is properly embedded, simply connected and converges to the CMC cylinder $C(\frac{n-1}{\lambda n})$ of radius $\frac{n-1}{\lambda n}$. Moreover:
	
	\begin{itemize}
		\item[1.1.] If $\lambda>\sqrt{n-1}/2$, $\sig_+$ intersects $C(\frac{n-1}{\lambda n})$ infinitely many times.
		
		\item[1.2.] If $\lambda=\sqrt{n-1}/2$, $\sig_+$ intersects $C(\frac{n-1}{\lambda n})$ a finite number of times and is a graph outside a compact set.
		
		\item[1.3.] If $\lambda<\sqrt{n-1}/2$, $\sig_+$ is a proper graph over the disk of radius $\frac{n-1}{\lambda n}$.
	\end{itemize}
	
	\item[2.] For $\lambda>1$, $\sig_-$ is properly immersed (with infinitely-many self-intersections), simply connected and has unbounded distance to the axis $x\n1$.
	
	\item[3.] For $\lambda=1$, $\sig_-$ is a horizontal hyperplane.
	
	\item[4.] For $\lambda<1$, $\sig_-$ is a strictly convex, entire graph.
\end{itemize}

\label{Classification1}	
\end{teo}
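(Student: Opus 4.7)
The plan is to assemble the detailed phase plane analysis developed throughout Section \ref{rot} into the three regimes singled out by the statement. More precisely, for each value of $\lambda>0$ I will identify the orbits in $\Theta_1$ (and, where relevant, in $\Theta_{-1}$) that arise from the upward (respectively downward) unit normal at an orthogonal intersection with the axis of rotation, and translate their qualitative behaviour into the topological and embeddedness properties claimed. All information needed is already contained in Lemmas \ref{resumenfases}, \ref{existorbitaext}, \ref{noclosed} and Proposition \ref{contradicecomparacioncurvaturamedia}; the proof amounts to packaging these inputs.

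For $\sig_+$, I will apply Lemma \ref{existorbitaext} with $\varepsilon=\delta=1$ (so $\varepsilon(\delta+\lambda)=1+\lambda>0$ for every $\lambda>0$) to obtain the unique orbit $\gamma_+$ in $\Theta_1$ with endpoint $(0,1)$. Item \textit{1.} of Proposition \ref{contradicecomparacioncurvaturamedia} and the ensuing discussion show (via the mean curvature comparison principle and Lemma \ref{noclosed}) that $\gamma_+$ cannot remain in any single monotonicity region, and must therefore converge asymptotically to $e_0=\bigl(\frac{n-1}{\lambda n},0\bigr)$. Since $z_+'(s)>0$ throughout (because $\varepsilon=1$), the profile $\alpha_+$ is embedded and the generated $\sig_+$ is properly embedded, simply connected and converges to $C(\frac{n-1}{\lambda n})$. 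The three sub-cases 1.1--1.3 then drop out of the linearisation of \eqref{1ordersys} at $e_0$: an inward spiral when $\lambda>\sqrt{n-1}/2$ forces infinitely many crossings of $x=\frac{n-1}{\lambda n}$; the improper-node case $\lambda=\sqrt{n-1}/2$ yields only finitely many crossings and a graph outside a compact set; and the sink structure when $\lambda<\sqrt{n-1}/2$ prevents $\alpha_+'$ from ever being vertical, so $\sig_+$ is a proper graph over the disk of radius $\frac{n-1}{\lambda n}$.

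For $\sig_-$ I split according to $\lambda$. When $\lambda>1$, Lemma \ref{existorbitaext} with $\varepsilon=1,\ \delta=-1$ gives the orbit $\gamma_-$ in $\Theta_1$ analysed in Proposition \ref{contradicecomparacioncurvaturamedia} and the paragraphs that follow; tracking $\gamma_-$ as it alternates between $\Theta_1$ and $\Theta_{-1}$—each excursion forced by monotonicity, properness, the non-convergence to the segment $\{x=0,\ |y|<1\}$, and the inability to self-intersect—yields the simply connected, properly immersed $\sig_-$ with infinitely many self-intersections and unbounded distance to the axis. When $\lambda=1$, a direct computation with $v=e\n1$ shows that any horizontal hyperplane $\{x\n1=c_0\}$ with downward normal $-e\n1$ satisfies $H=\langle -e\n1,e\n1\rangle+1=0$ and is therefore an $\H_1$-hypersurface; by the uniqueness part of Lemma \ref{existorbitaext} and by the fact that no orbit in $\Theta_\varepsilon$ can limit onto the line $y=-1$ (which would violate the Cauchy uniqueness of this explicit solution), these hyperplanes exhaust the rotational $\H_1$-examples with downward normal at the axis. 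When $\lambda<1$, Lemma \ref{existorbitaext} with $\varepsilon=\delta=-1$ (so $\varepsilon(\delta+\lambda)=1-\lambda>0$) produces an orbit $\gamma_-$ in $\Theta_{-1}$ which, by monotonicity, cannot meet $\Gamma_{-1}$ and, by properness together with the same comparison argument used in Proposition \ref{contradicecomparacioncurvaturamedia}, must converge to the asymptotic line $y=-\lambda$. The resulting profile has bounded slope and strictly positive geodesic curvature, so rotating it produces a strictly convex entire graph.

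The main obstacle is not any single calculation but the careful bookkeeping in the case $\lambda>1$ for $\sig_-$: one must verify that every time $\gamma_-$ crosses the line $y=\pm 1$ it consistently re-enters the opposite phase plane, never self-intersects, and never escapes to the forbidden boundary segment $\{x=0,\ |y|<1\}$. This is where the monotonicity of the regions $\Lambda_i$ (item \textit{4} of Lemma \ref{resumenfases}), properness of the orbits (item \textit{2}), the tangential intersection rules (items \textit{3} and \textit{5}), and the non-existence of closed examples (Lemma \ref{noclosed}) must be combined systematically. Everything else follows by routine translation of the phase portrait into the geometry of $\alpha_\pm$ and their rotational lifts.
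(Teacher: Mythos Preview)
Your proposal is correct and follows essentially the same route as the paper. Theorem \ref{Classification1} is stated in the paper as a summary of the phase plane discussion carried out case by case throughout Section \ref{rot}; you have correctly identified the relevant orbits via Lemma \ref{existorbitaext} in each regime, invoked Proposition \ref{contradicecomparacioncurvaturamedia} and Lemma \ref{noclosed} for the $\lambda>1$ bookkeeping of $\gamma_-$, used the linearisation at $e_0$ to separate the three sub-cases of item \textit{1}, and reproduced the paper's arguments for $\lambda=1$ (horizontal hyperplanes blocking limits on $y=-1$) and $\lambda<1$ (convergence of $\gamma_-\subset\Theta_{-1}$ to the asymptote $y=-\lambda$).
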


\begin{teo}
Let $\sig$ be a complete, rotational $\lH$ non-intersecting the axis $x\n1$. Then, $\sig$ is properly immersed and diffeomorphic to $\S^{n-1}\times\R$. One end converges to the CMC cylinder $C(\frac{n-1}{\lambda n})$ of radius $\frac{n-1}{\lambda n}$, and:

\begin{itemize}
	
	\item[1.] If $\lambda>1$, the other end has infinitely-many self-intersections and unbounded distance to the axis $x\n1$.
	
	\item[2.] If $\lambda\leq 1$, the other end is a graph outside a compact set.
\end{itemize}

Moreover, if $\lambda<1$ and the unit normal of $\sig$ at the points with horizontal tangent hyperplane is $-e\n1$, then $\sig$ is embedded.

	\label{Classification2}	
\end{teo}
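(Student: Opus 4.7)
The plan is to read off the classification directly from the phase-plane analysis carried out in this section. Any complete rotational $\lH$ $\sig$ not meeting the axis $x\n1$ is generated by a complete, arc-length parametrized profile curve $\alpha(s)=(x(s),z(s))$ with $x(s)>0$ for all $s\in\R$, whose trajectory in the phase planes $\Theta_{\pm 1}$ is a union of orbits of \eqref{1ordersys} glued along the boundary lines $y=\pm 1$. Since $\alpha$ is parametrized by all of $\R$, $\sig$ is abstractly diffeomorphic to $\S^{n-1}\times\R$, and proper immersion of $\sig$ follows from the properness of the orbits in $\Theta_{\pm 1}$ stated in Lemma \ref{resumenfases}.

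I would first produce the cylindrical end. Pick a crossing of the trajectory with the axis $y=0$ in $\Theta_1$, say $\gamma(0)=(\hat x,0)$; the monotonicity structure in Lemma \ref{resumenfases} together with the classification of the equilibrium $e_0=(\tfrac{n-1}{\lambda n},0)$ coming from the linearization at the start of this section forces the branch $s\to+\infty$ to converge asymptotically to $e_0$ in every range of $\lambda$, which translates into an end of $\sig$ asymptotic to $C(\tfrac{n-1}{\lambda n})$. The remaining end, obtained by letting $s$ decrease, is then analysed according to $\lambda$: for $\lambda>1$ the orbit alternates between $\Theta_1$ and $\Theta_{-1}$ through $y=\pm 1$ infinitely many times with $x(s)$ unbounded, as in Figure \ref{lmayor1fueraeje}, which gives item 1; for $\lambda=1$ a single transition through $y=1$ yields an orbit converging to the asymptote $y=-1$ in $\Theta_{-1}$ with $x\to +\infty$ and $z$ monotonic as in Figure \ref{ligual1fases}, producing a graph end; and for $\lambda<1$ the orbit converges to the asymptote $y=-\lambda$ in $\Theta_{-1}$ as in Figure \ref{lmenor1todofueraeje}, again producing a graph end. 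Together these cases yield items 1 and 2.

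For the final embeddedness claim in the range $\lambda<1$, I would distinguish according to whether the last crossing $x_0$ of $\gamma$ with the axis $y=0$ lies to the left or to the right of $e_0$. When $x_0<\tfrac{n-1}{\lambda n}$ the backward orbit leaves $\Theta_1$ through the boundary line $y=-1$, and at that exit point $\textbf{n}_\alpha=J\alpha'=(0,-1)=-e\n1$, so the unit normal at the unique horizontal tangent hyperplane of $\sig$ is $-e\n1$; along this branch $x(s)$ stays monotonic, hence $\alpha$ is a graph in the $(x,z)$-plane and $\sig$ is embedded. In the opposite sub-case the orbit exits through $y=+1$ with normal $+e\n1$ and a self-intersection appears, as in the orange configuration of Figure \ref{lmenor1todofueraeje}. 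The main technical obstacle in the whole argument is not the combinatorial bookkeeping but ruling out ``trapped'' configurations (orbits that stay forever in a single monotonicity region, converge to forbidden boundary points on $\{x=0\}$, or fail to reach $e_0$); once these are excluded using monotonicity, properness, and the tangent-sphere mean curvature comparison already employed in the proof of Proposition \ref{contradicecomparacioncurvaturamedia}, the classification reads off directly from the monotonicity diagrams.
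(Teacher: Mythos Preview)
Your proposal follows the paper's approach: Theorem \ref{Classification2} is indeed stated as a summary of the phase-plane discussion carried out case by case in Section \ref{rot}, and your outline of how each conclusion is read off from the behaviour of orbits in $\Theta_1$ and $\Theta_{-1}$ matches the paper's treatment for $\lambda>1$, $\lambda=1$, and $\lambda<1$.

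There is, however, one concrete error in your embeddedness argument for $\lambda<1$. You write that in the sub-case $x_0<\frac{n-1}{\lambda n}$ ``$x(s)$ stays monotonic, hence $\alpha$ is a graph in the $(x,z)$-plane''. This is false whenever $\sqrt{n-1}/2<\lambda<1$: the forward orbit spirals around $e_0$, so $y(s)=x'(s)$ changes sign infinitely often and $x(s)$ oscillates about $\frac{n-1}{\lambda n}$. Even in the non-spiralling range $\lambda\le\sqrt{n-1}/2$, $x(s)$ has a strict minimum at $s=0$ (the last $y=0$ crossing) and is therefore not globally monotone; so $\alpha$ is never a graph over the $x$-axis. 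What \emph{is} true is that $z(s)$ is strictly monotone on each of the two arcs separated by the unique horizontal-tangent instant $s_1$ (since $z'>0$ in $\Theta_1$ and $z'<0$ in $\Theta_{-1}$), so each arc is a graph $x=h(z)$, respectively $x=k(z)$; embeddedness amounts to showing $h(z)<k(z)$ for all $z>z(s_1)$. The $\Theta_{-1}$ branch has $k$ strictly increasing from $x_1$ to $+\infty$, while the $\Theta_1$ branch starts at $x_1$ and immediately decreases; the remaining point is that the spiral toward $e_0$ never pushes $h$ back up to $x_1$. The paper itself only asserts this (relying on Figure \ref{lmenor1todofueraeje}) and does not spell out the comparison, so your sketch is not less rigorous than the paper on this step—but the specific justification you gave (``$\alpha$ is a graph'') is incorrect and should be replaced by the two-branch $z$-graph argument.
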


Observe that the end which converges to $C(\frac{n-1}{\lambda n})$ has the same asymptotic behavior than the one observed in item \textit{1.} in Theorem \ref{Classification1}.

\def\refname{References}

The first author was partially supported by MICINN-FEDER Grant No. MTM2016-80313-P. For the second author, this research is a result of the activity developed within the framework of the Programme in Support of Excellence Groups of the Región de Murcia, Spain, by Fundación Séneca, Science and Technology Agency of the Región de Murcia. Irene Ortiz was partially supported by MICINN/FEDER project PGC2018-097046-B-I00 and Fundación Séneca project 19901/GERM/15, Spain.


\begin{thebibliography}{9}\label{References}

\bibitem[Ale]{Ale} A.D. Alexandrov, Uniqueness theorems for surfaces in the large, I, {\it Vestnik Leningrad Univ.} {\bf 11} (1956), 5--17. (English translation: {\it Amer. Math. Soc. Transl.} {\bf 21} (1962), 341--354).

\bibitem[BCMR]{BCMR} V. Bayle, A. Cañete, F. Morgan, C. Rosales, On the isoperimetric problem in Euclidean space with density, {\it Calc. Var. Partial Diff. Equations} {\bf 31} (2008), 27--46.

\bibitem[Bue1]{Bue1} A. Bueno, The Björling problem for prescribed mean curvature surfaces in $\r3$, \emph{Ann. Glob. Ann. Geom.} {\bf 56} 2019, 87--96.

\bibitem[Bue2]{Bue2} A. Bueno, Half-space theorems for properly immersed surfaces in $\r3$ with prescribed mean curvature, \emph{Ann. Mat. Pur. Appl.} DOI: 10.1007/s10231-019-00886-1.

\bibitem[BGM1]{BGM1} A. Bueno, J.A. Gálvez, P. Mira, The global geometry of surfaces with prescribed mean curvature in $\R^3$, preprint. arxiv:1802.08146

\bibitem[BGM2]{BGM2} A. Bueno, J.A. Gálvez, P. Mira, Rotational hypersurfaces of prescribed mean curvature, preprint. arxiv:1902.09405.

\bibitem[Chr]{Chr} E.B. Christoffel, Über die Bestimmung der Gestalt einer krummen Oberfläche durch lokale Messungen auf derselben. {\it J. Reine Angew. Math.} {\bf 64} (1865), 193--209.

\bibitem[CSS]{CSS} J. Clutterbuck, O. Schnurer, F. Schulze, Stability of translating solutions to mean curvature flow, {\it  Calc. Var. Partial Diff. Equations} {\bf 29} (2007), no. 3, 281--293.

\bibitem[Gro]{Gro} M. Gromov, Isoperimetry of waists and concentration of maps, {\it Geom. Funct. Anal.} {\bf 13} (2003), 178--215.

\bibitem[GuGu]{GuGu} B. Guan, P. Guan, Convex hypersurfaces of prescribed curvatures, {\it Ann. Math.} {\bf 156} (2002), 655--673.

\bibitem[Hui]{Hui} G. Huisken, The volume preserving mean curvature flow, {\it J. Reine Angew. Math.} {\bf 382} (1987), 35--48.

\bibitem[HuSi]{HuSi} G. Huisken, C. Sinestrari, Convexity estimates for mean curvature flow and singularities of mean convex surfaces, {\it Acta Math.}, {\bf 183} (1993), no. 1, 45--70.

\bibitem[Ilm]{Ilm} T. Ilmanen, Elliptic regularization and partial regularity for motion by mean curvature, {\it Mem. Amer. Math. Soc.} {\bf 108} (1994).

\bibitem[Lop]{Lop} R. López, Invariant surfaces in Euclidean space with a log-linear density, \emph{Adv. Math.} \textbf{339} (2018), 285--309.

\bibitem[Mar]{Mar} T. Marquardt, Remark on the anisotropic prescribed mean curvature equation on arbitrary domains, {\it Math. Z.} {\bf 264} (2010), 507--511.

\bibitem[MSHS]{MSHS} F. Martín, A. Savas-Halilaj, K. Smoczyk, On the topology of translating solitons of the mean curvature flow, {\it Calc. Var. Partial Diff. Equations} {\bf 54} (2015), no. 3, 2853-2882.

\bibitem[Min]{Min} H. Minkowski, Volumen und Oberfläche, \emph{Math. Ann.} \textbf{57} (1903), 447--495.

\bibitem[Pog]{Pog} A.V. Pogorelov, Extension of a general uniqueness theorem of A.D. Aleksandrov to the case of nonanalytic surfaces (in Russian), {\it Doklady Akad. Nauk SSSR} {\bf 62} (1948), 297--299.

\bibitem[SpXi]{SpXi} J. Spruck, L. Xiao, Complete translating solitons to the mean curvature flow in $\R^3$ with nonnegative mean curvature, \emph{Amer. J. Math.} (2017), 1--23, arXiv:1703.01003.
\end{thebibliography}
\end{document}

The study of hypersurfaces immersed in the Euclidean space described as solutions of a prescribed curvature problem in terms of their Gauss map is a classical problem in the global Differential Geometry that goes back, at least, to the classical Minkowski problem for ovaloids. For the particular but important case that we prescribe the mean curvature, the existence and uniqueness of closed examples were studied by Alexandrov and Pogorelov in the '50s, and more recently the first author jointly with Gálvez and Mira developed the global theory of complete hypersurfaces with prescribed mean curvature.  In this paper we consider hypersurfaces with linear prescribed mean curvature in terms of their Gauss map. The importance of these class of immersed hypersurfaces is that they satisfy several characterizations that are closely related with the theory of manifolds with density. In particular, these hypersurfaces have constant weighted mean curvature, as defined by Gromov; are solutions of a parabolic, geometric flow; and are solutions of a variational problem involving the weighted area and volume functionals. In this paper we obtain explicit parametrizations of constant curvature hypersurfaces, and also a classification result for rotational hypersurfaces.